\documentclass[a4paper, 12pt, UKenglish]{article}

% paging
\usepackage{babel}
\usepackage[utf8]{inputenc}
\usepackage[T1]{fontenc}
\usepackage[a4paper]{geometry}
\usepackage{needspace}

% font
\usepackage{libertine} % text font
\usepackage{inconsolata} % texttt font

% others
\usepackage{amsmath, amsthm, amssymb}
\usepackage{graphicx}
\usepackage{enumerate}
\usepackage{authblk}
\usepackage{thmtools}
\usepackage{thm-restate}
\usepackage[colorlinks=true, citecolor=red]{hyperref}
\usepackage[capitalise]{cleveref}
\usepackage{float}
\usepackage{xspace}

%conference version
%\usepackage{environ}
%\NewEnviron{killcontents}{}
%\let\proof\killcontents
%\let\endproof\endkillcontents

% narrow conference-like abstract
\renewenvironment{abstract}
{\small\vspace{-1em}
\begin{center}
\bfseries\abstractname\vspace{-.5em}\vspace{0pt}
\end{center}
\list{}{
\setlength{\leftmargin}{0.6in}%
\setlength{\rightmargin}{\leftmargin}}%
\item\relax}
{\endlist}

\declaretheorem[name=Theorem, numberwithin=section]{theorem}
\declaretheorem[name=Lemma, sibling=theorem]{lemma}

\declaretheorem[name=Conjecture, sibling=theorem]{conjecture}

\declaretheorem[name=Claim, sibling=theorem]{claim}

\declaretheorem[name=Question, style=remark, sibling=theorem]{question}
% for the qed's of claims
\def\cqedsymbol{\hfill$\lrcorner$} 
\newcommand{\cqed}{\renewcommand{\qedsymbol}{\cqedsymbol}}

% for authorblk

% preventing double pages footnote
\interfootnotelinepenalty=10000

\newcommand\eqdef{\overset{\text{\tiny{def}}}{=}} 

 % pretty arrow
 % 
\newcommand{\Cc}{\mathcal{C}}
\newcommand{\Pc}{\mathcal{P}}
\newcommand{\Nn}{\mathbb{N}}
\DeclareMathOperator{\mw}{mw}
\DeclareMathOperator{\fw}{fw}

\newcommand{\EH}{Erdős--Hajnal\xspace}

\let\le\leqslant
\let\ge\geqslant

\let\eps\varepsilon
\let\qle\preceq
\let\qge\succeq

% no wide stretch of large operator s.t. "a<b" 
\thickmuskip=5mu plus 1mu minus 2mu

\title{$\chi$-Boundedness and Neighbourhood Complexity \\ of Bounded Merge-Width Graphs}

\author[1]{Marthe Bonamy}
\author[2]{Colin Geniet\thanks{Supported by the Institute for Basic Science (IBS-R029-C1).}}
\affil[1]{CNRS, LaBRI, Université de Bordeaux, Bordeaux, France.}
\affil[2]{Discrete Mathematics Group, Institute for Basic Science (IBS), Daejeon, South Korea}
\date{\today}

\begin{document}

\maketitle

\begin{abstract}
    Merge-width, recently introduced by Dreier and Toruńczyk, is a common generalisation of bounded expansion classes and twin-width for which the first-order model checking problem remains tractable.
    We prove that a number of basic properties shared by bounded expansion and bounded twin-width graphs also hold for bounded merge-width graphs:
    they are $\chi$-bounded, they satisfy the strong \EH property, and their neighbourhood complexity is linear.
\end{abstract}

% edition
%\overfullrule=50pt % to spot overfull hbox

\section{Introduction}\label{sec:intro}
Dreier and Toruńczyk introduced merge-width in~\cite{merge-width} as the next step in the program of characterising the graph classes that have fixed parameter tractable algorithms for first-order model checking---the problem of testing a given first-order formula~$\phi$ on a given graph~$G$.
Bounded merge-width classes admit such an algorithm (assuming appropriate witnesses of merge-width are given),
and encompass both bounded expansion classes and bounded twin-width classes,
thus unifying the known model checking algorithms for the latter two~\cite{dvorak2013expansion,twin-width1}.
Merge-width is conjectured to be equivalent to \emph{flip-width}, previously introduced by Toruńczyk with similar goals~\cite{flip-width}, but for which the model checking problem remains open.

In this work, we show with elementary proofs that merge-width satisfies a few classical graph theoretic properties, the most significant being \emph{$\chi$-boundedness}.
This generalises on the one hand the simple remark that bounded expansion graphs are degenerate, and thus have bounded chromatic number,
and on the other hand the fact that bounded clique-width~\cite{dvorak2012cliquewidth} and bounded twin-width~\cite{twin-width3} graphs are $\chi$-bounded.
We leave open the question of \emph{polynomial} $\chi$-boundedness, which in the case of clique-width and twin-width required the introduction of much more involved tools~\cite{bonamy2020cliquewidth,pilipczuk2023quasipoly,bourneuf2025bounded}.

Merge-width is defined through \emph{merge sequences}:
for a graph~$G$, a merge sequence consists of $(\Pc_1,R_1),\dots,(\Pc_m,R_m)$ where:
\begin{enumerate}
  \item Each~$\Pc_i$ is a partition of~$V(G)$, with $\Pc_1 = \{\{x\} : x \in V(G)\}$ being the partition into singletons, $\Pc_m = \{V(G)\}$ being the trivial partition, and each~$\Pc_i$ being coarser than (or equal to)~$\Pc_{i-1}$, meaning that each part~$P \in \Pc_i$ is obtained by merging any number of parts of~$\Pc_{i-1}$.
  \item $R_1 \subseteq \dots \subseteq R_m \subseteq \binom{V(G)}{2}$ is a monotone sequence of sets of \emph{resolved pairs}.
  \item For any two (possibly equal) parts~$A,B \in \Pc_i$, the pairs in $AB \setminus R_i$ (i.e.\ the \emph{unresolved pairs} between~$A$ and~$B$) are either all edges, or all non-edges in~$G$.
\end{enumerate}
To restrict merge sequences, one defines their \emph{width}, parametrised by a radius~$r \in \Nn$:
it is the maximum over all steps~$i \ge 2$ and vertices~$v \in V(G)$ of the number of parts in~$\Pc_{i-1}$ accessible from~$v$ by a path of length at most~$r$ in the graph $(V(G),R_i)$ of resolved pairs.
The mismatch of indices between~$\Pc_{i-1}$ and~$R_i$ is intentional, and prevents one from simultaneously merging too many parts and adding too many resolved pairs when going from~$(\Pc_{i-1},R_{i-1})$ to~$(\Pc_i,R_i)$.
The \emph{radius-$r$ merge-width} of~$G$, denoted by~$\mw_r(G)$, is the minimum radius-$r$ width of a merge sequence for~$G$.
A class~$\Cc$ of graphs has \emph{bounded merge-width} if there is a function~$f$ such that any~$G \in \Cc$ satisfies~$\mw_r(G) \le f(r)$ for all~$r$.

We will show that bounded merge-width classes have the strong \EH property, are $\chi$-bounded, and have linear neighbourhood complexity.
All three properties were previously known to hold in bounded expansion classes and bounded twin-width classes, and the first two generalisations were conjectured in~\cite[Section~1, Discussion]{merge-width}.
Linear neighbourhood complexity also generalises to bounded flip-width classes.
We show that it is enough to bound merge-width (or flip-width) at radius~1 or~2 to obtain these results.

At the core of our proofs is a technique sometimes called \emph{freezing}, previously used in the context of twin-width~\cite{twin-width3,gajarsky2022stable}.
Freezing reduces the study of a merge-sequence~$\Pc_1,\dots,\Pc_m$ to a single, judiciously chosen partition.
In its simplest form, this means looking at the first (or last) step~$i$ for which~$\Pc_i$ satisfies the desired property.
The more general form of freezing creates a new partition~$\Pc$ across~$\Pc_1,\dots,\Pc_m$,
by considering the inclusion-wise maximal parts satisfying the property of interest.
That is, for each vertex~$v$, one considers the maximum~$i$ such that the part~$P \in \Pc_i$ containing~$v$ satisfies the desired property, and~$\Pc$ is defined as the collection of all such~$P$.

\subsection{Strong \EH property}
The strong \EH property refers to the presence of linear-size bicliques or anti-bicliques.
Bounded twin-width classes were shown to satisfy this property in~\cite[Theorem~22]{twin-width3} with a very simple proof.
For bounded expansion classes, it is an immediate corollary of degeneracy.
We show in \cref{sec:EH} that the same holds for bounded radius-1 merge-width.
\begin{restatable}{theorem}{erdoshajnal}
  \label{thm:mw-erdos-hajnal}
  Any graph~$G$ with~$n$ vertices satisfying~$\mw_1(G) = k$ contains disjoint subsets~$A,B$ of size~$\Omega(\frac{n}{k^2})$ such that~$A,B$ are either complete or anti-complete.
\end{restatable}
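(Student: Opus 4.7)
Plan: The strategy is to locate, at some step $i$ of the merge sequence, a single sub-part $P^* \in \Pc_{i-1}$ that is simultaneously ``large'' (of size $\Omega(n/k^2)$) and has a ``small footprint'' with respect to $R_i$, meaning that at most $n - \Omega(n/k^2)$ vertices outside $P^*$ have an $R_i$-neighbor inside $P^*$. Once such a $P^*$ is found, set $B := P^*$ and let $X^* \in \Pc_i$ denote the unique part containing $P^*$. For each ``safe'' vertex $v$---i.e.\ $v \notin P^*$ with no $R_i$-neighbor in $P^*$---the pair $\{v\} \times P^*$ is contained in $Y_v \times X^*$ (for the part $Y_v \in \Pc_i$ of $v$) and consists entirely of unresolved pairs; the homogeneity condition at step $i$ applied to $(Y_v, X^*)$ then forces $(\{v\}, P^*)$ to be uniformly either complete or anticomplete in $G$. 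Since there are only two possible types, a pigeonhole on the $\Omega(n)$ safe vertices yields a monochromatic subset $A$ of size $\Omega(n/k^2)$, and $(A,B)$ is the desired pair.

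To produce $(i, P^*)$, I would rely on the counting identity
\[
  \sum_{P \in \Pc_{i-1}} t_i(P) \le (k-1)n, \qquad t_i(P) = |\{v \notin P : v \text{ has an } R_i\text{-neighbor in } P\}|,
\]
which is immediate from the radius-$1$ condition since each vertex contributes at most $k-1$ (once per foreign part of $\Pc_{i-1}$ that it touches via $R_i$). Combined with $\sum_P |P| = n$, this gives
\[
  \sum_P |P|\,d_i(P) \;\ge\; n\bigl(n - k\cdot \max_P |P|\bigr),
\]
where $d_i(P) := n - |P| - t_i(P)$ counts the safe vertices for $P$. Taking $i$ to be the smallest step with $|\Pc_i| \le k^2$, so that $|\Pc_{i-1}| > k^2$, and restricting attention to the regime where $\max_{P \in \Pc_{i-1}} |P| \le n/(2k)$, the average of $|P| \cdot d_i(P)$ is $\Omega(n^2/k^2)$; selecting $P^*$ that attains this bound and performing a short case split on whether $|P^*|$ is above or below $n/2$ gives $\min(|P^*|, d_i(P^*)) \ge \Omega(n/k^2)$, which is exactly what is needed.

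The main obstacle is the complementary ``dominant part'' regime, in which every step at which $|\Pc_{i-1}|$ is small enough for the averaging to apply still has some part of $\Pc_{i-1}$ of size exceeding $n/(2k)$, so that the product bound above degenerates. In this situation I would instead locate the smallest step $i$ at which some part of $\Pc_i$ has size in $[n/2,\,n-1]$ and apply the theorem inductively to the induced subgraph on that part, using that merge-width is hereditary. The delicate point is that naive induction loses a factor of two in the constant per level and could compound catastrophically; controlling this will require either restricting induction to parts of size $n - o(n)$ (which, combined with the averaging argument, should suffice whenever the dominant part is extremely concentrated) or proving a slightly strengthened inductive hypothesis that absorbs the loss. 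Making this case distinction go through uniformly---so that in both the ``balanced'' and ``dominant'' regimes one recovers a pair of size $\Omega(n/k^2)$ with no constant-degradation---is the heart of the argument.
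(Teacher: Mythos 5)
Your endgame is sound and matches the paper's: a set of ``safe'' outside vertices, each having only unresolved pairs to a subset of a single part of $\Pc_i$, is forced by the homogeneity condition to be complete or anticomplete to that subset, and a pigeonhole on the two types finishes. The counting identity $\sum_{P\in\Pc_{i-1}} t_i(P)\le (k-1)n$ is also correct. The gap is in producing the pair $(i,P^*)$, and it is not a technicality. First, the ``balanced regime'' as written does not close: the condition $|\Pc_{i-1}|>k^2$ is a \emph{lower} bound on the number of parts, which makes the average of $|P|\,d_i(P)$ smaller, not larger; to extract a part with $|P^*|\,d_i(P^*)=\Omega(n^2/k^2)$ from $\sum_P |P|\,d_i(P)\ge n^2/2$ you need an \emph{upper} bound $|\Pc_{i-1}|=O(k^2)$. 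Second, and more fundamentally, a step at which $\Pc_{i-1}$ simultaneously has $O(k^2)$ parts and no part of size exceeding $n/(2k)$ need not exist: a merge sequence may pass directly from ``very many tiny parts'' to ``one dominant part plus many singletons'' (for $K_n$ one can even go from the singleton partition to the trivial partition in a single step with $R=\emptyset$ and radius-$1$ width $1$, so $\Pc_{i-1}$ is the singleton partition and every $P^*$ available to you has size $1$). Your fallback for this case---inducting into the dominant part---is exactly the part you leave open, and it is where the whole difficulty sits: nothing bounds the number of induction levels, so the per-level loss you worry about is fatal as stated.

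The paper avoids searching for a single part of $\Pc_{i-1}$ that is both large and lightly touched. It takes the \emph{first} step $i$ at which $\Pc_i$ acquires a part $P$ of size $>\eps n$, with $\eps=1/(2k+4)$, so that all parts of $\Pc_{i-1}$ are small, and fixes an arbitrary subset $U\subseteq P$ of size $\lceil\eps n\rceil$. It then counts from the \emph{inside} rather than the outside: each $u\in U$ has $R_i$-neighbours in fewer than $k+1$ parts of $\Pc_{i-1}$, each of size at most $\eps n$. Since different vertices of $U$ touch different parts, one cannot keep all of $U$; \cref{lem:bipartite-resolved} handles this by iteratively discarding a block of parts of $\Pc_{i-1}$ of total size about $n/(2k)$ together with the vertices of $U$ that touch it, terminating after at most $k$ rounds with $A\subseteq U$ and $B\subseteq V\setminus U$ having no $R_i$-pair between them and $|A|,|B|=\Omega(n/k^2)$. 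That iterative refinement (or an equivalent device letting the ``inside'' set be a subset of a part of $\Pc_i$ rather than a part of $\Pc_{i-1}$) is the ingredient your proposal is missing.
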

Our proof is inspired by the one for twin-width: we consider the first step~$i$ in the merge sequence such that~$\Pc_i$ has a part of size at least~$\eps n$, for a well chosen~$\eps$.

\subsection[χ-boundedness]{$\chi$-boundedness}
Next, we prove in \cref{sec:chi} that any class~$\Cc$ of graphs with bounded merge-width is $\chi$-bounded:
the chromatic number~$\chi(G)$ is bounded by some function of the size of a maximum clique~$\omega(G)$.
Precisely:
\begin{restatable}{theorem}{mwchibounded}\label{thm:chi-bounded}
    Any graph~$G$ with~$\mw_2(G) \le k$ and~$\omega(G) \le t$ satisfies
    \[ \chi(G) \le (t+1)!k^{2t-2}. \]
\end{restatable}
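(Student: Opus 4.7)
My plan is to proceed by induction on $t = \omega(G)$. The base case $t = 1$ is immediate: $G$ is edgeless, so $\chi(G) = 1 \le 2 = 2! \cdot k^{0}$. For the inductive step, the main tool is the following partition lemma to be proved: there is a partition $V(G) = V_{1} \sqcup \dots \sqcup V_{s}$ with $s \le (t+1) k^{2}$ such that $\omega(G[V_{j}]) \le t-1$ for every $j$. Granting this, each $G[V_{j}]$ satisfies $\mw_{2}(G[V_{j}]) \le k$ (any merge sequence of~$G$ restricts to one on~$V_{j}$ without increase of width), so by the induction hypothesis $\chi(G[V_{j}]) \le t! \cdot k^{2t-4}$, and consequently $\chi(G) \le (t+1) k^{2} \cdot t! \cdot k^{2t-4} = (t+1)! \cdot k^{2t-2}$.

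To establish the partition lemma, I would exploit the merge sequence $(\Pc_{i}, R_{i})_{i=1}^{m}$ of~$G$. For each vertex~$v$ lying in some $t$-clique of~$G$, define the \emph{critical step} $i(v)$ to be the smallest~$i$ such that the part of $\Pc_{i}$ containing~$v$ already contains an entire $t$-clique of~$G$; fix such a clique $K_{v}$ canonically. The colour of~$v$ is a pair $(\pi(v), \sigma(v))$, where $\pi(v) \in \{0,\dots,t\}$ is $v$'s rank inside $K_{v}$ when its vertices are ordered by critical step (contributing a factor of $t+1$), and $\sigma(v)$ is a canonically chosen part of $\Pc_{i(v)-1}$ reachable from~$v$ within distance~$2$ in the resolved-pair graph $(V(G), R_{i(v)})$ (contributing a factor of $k^{2}$ by the radius-$2$ width bound: at most~$k$ parts at distance~$1$, each leading to at most~$k$ further parts at distance~$2$). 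Vertices in no $t$-clique lie in a subgraph with $\omega \le t-1$ and can be placed in a default class.

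The main obstacle is to verify that this colouring admits no monochromatic $t$-clique. Given a $t$-clique~$K$, consider the vertex $v^{*} \in K$ maximising $i(\cdot)$ over~$K$: by definition, at step $i(v^{*})$ the clique~$K$ first collapses into a single part of $\Pc_{i(v^{*})}$, while at step $i(v^{*})-1$ it is spread over at least two parts. The uniformity condition on unresolved pairs then yields a dichotomy. In the \emph{sparse} local picture, some pair within~$K$ is already resolved at step $i(v^{*})$, and the parts of $\Pc_{i(v^{*})-1}$ containing other members of~$K$ all lie at distance~$1$ from~$v^{*}$ in $R_{i(v^{*})}$, so the sibling choices $\sigma(\cdot)$ across~$K$ are drawn from a single bounded local collection that precludes colour matching. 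In the \emph{dense} local picture, every pair within~$K$ is unresolved at step $i(v^{*})$, so the `all edges' uniformity applies on each cross-part pair of parts hosting~$K$; the radius-$2$ probing (rather than just radius~$1$) is then needed to detect structural differences between parts via second-neighbour parts. Carrying out this case split rigorously---in particular, designing the canonical choices of $K_{v}$ and $\sigma(v)$ so as to guarantee that two vertices of~$K$ must disagree on at least one colour coordinate---is the combinatorial heart of the argument, and is the step likely to require the most care.
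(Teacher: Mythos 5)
Your outer induction is sound, and the ``partition lemma'' you aim for is in fact true: it can be extracted from the paper's argument, which edge-partitions $E$ into $E_I\cup E_R\cup E_U$ (\cref{lem:edge-partition}), where $E_I$ lives inside the parts of a partition~$\Pc$ into $K_t$-free pieces and the spanning subgraphs $(V,E_R)$, $(V,E_U)$ are shown to be $k$- and $(kt+1)$-colourable. The product of those two colourings partitions $V(G)$ into at most $k(kt+1)\le (t+1)k^2$ classes in which every edge lies inside a part of~$\Pc$, hence every class is $K_t$-free --- exactly your lemma. So the reduction is fine; the gap is entirely in your proof of the lemma, and it sits precisely at the step you flag as ``the combinatorial heart.'' The colouring $(\pi(v),\sigma(v))$ carries no mechanism forcing two vertices of a $t$-clique to disagree: observing that the values $\sigma(\cdot)$ over a clique $K$ are ``drawn from a single bounded local collection'' is the opposite of what you need, since a small shared palette makes collisions easier, not harder. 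To exclude a monochromatic $t$-clique one needs an asymmetric, degeneracy-type argument, and that is where all the work is: the paper shows that in the quotient by the maximally $K_t$-free parts, each part has at most $kt$ ``later'' neighbours along unresolved edges (located via a $t$-clique $X$ in its successor part and the radius-$1$ balls around $X$ in $R_{i+1}$), and handles the resolved cross-part edges by a separate, delicate argument requiring a \emph{minimal} merge sequence (\cref{lem:minimal-merge-sequence}) to show those edges alone admit a valid, ``structurally $\omega$-bounded'' merge sequence, which is then $k$-colourable by another degeneracy argument (\cref{lem:struct-bounded-colouring}). Your sparse/dense dichotomy does not engage with either mechanism and does not, as stated, produce any contradiction with the width bound.

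There are also smaller but real defects in the construction. Your critical step $i(v)$ asks only that $v$'s part contain \emph{some} $t$-clique, which need not contain $v$, so ``$v$'s rank inside $K_v$'' may be undefined; even after repairing this, the vertices of a given $t$-clique $K$ will generally use canonical cliques other than $K$, so equal ranks $\pi(\cdot)$ tell you nothing about the relative position of the vertices of $K$ itself. Likewise $\sigma(v)$ is ``a canonically chosen part within distance~$2$'' --- but unless the choice is tied to the specific clique being analysed (which it cannot be, since $v$ may lie in many $t$-cliques), there is no reason two vertices of $K$ should select different parts. As it stands the proposal is a plausible plan whose central claim is unproven, and the missing piece is essentially the entire content of \cref{lem:edge-partition}.
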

Bounded expansion classes are $\chi$-bounded in a trivial sense: they are degenerate, and thus have bounded chromatic number (and clique number).
Bounded twin-width classes were shown to be $\chi$-bounded in \cite[Theorem~21]{twin-width3}.
This was improved to reach polynomial $\chi$-boundedness in \cite{pilipczuk2023quasipoly,bourneuf2025bounded}, i.e.\ when twin-width is fixed, the chromatic number is bounded by a polynomial function of the clique number.
Dreier and Toruńczyk also asked whether the same holds for bounded merge-width.
\begin{conjecture}[\cite{merge-width}]\label{conj:poly-chi-bounded}
    Bounded merge-width classes are polynomially $\chi$-bounded.
\end{conjecture}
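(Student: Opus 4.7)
Since the statement is the open conjecture closing the excerpt, what follows is a research strategy rather than a complete plan, modelled on the polynomial $\chi$-boundedness results for bounded twin-width of Pilipczuk-Sokołowski and Bourneuf-Pilipczuk-Sokołowski and translated to the merge-sequence setting.

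The first step is to diagnose why Theorem~\ref{thm:chi-bounded} yields only a factorial $\chi$-binding function. The factorial arises from a layer-by-layer induction on $t = \omega(G)$ that peels off neighborhoods and multiplies the palette by a factor of order $t \cdot k^{O(1)}$ at each step. A key observation is that even collapsing the per-step factor to a constant $C(k)$ independent of $t$ would still yield $\chi(G) \le C(k)^{t}$, which is \emph{not} polynomial in $\omega(G)$. So any proof of the conjecture must go beyond a purely local induction and exploit global structure extracted from the merge sequence; reducing the recursion depth from $t$ to something like $O(\log n)$ is the natural target.

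Concretely, I would fix a merge sequence of width $k$ at a small radius $r \in \{1,2\}$ and, at each level $i$, consider the quotient trigraph on $\Pc_i$ whose edges $AB$ are labelled white (all pairs are edges of $G$), black (all non-edges), or red (containing a pair of $R_i$). The width bound forces every vertex to see at most $k$ parts within distance $r$ in the red subgraph. The strategy is then to prove that $\chi(G)$ is bounded by $\omega(G)^{O_k(1)}$ times the chromatic number of a suitable \emph{contraction} of $G$, obtained by quotienting along one carefully chosen level of the merge sequence, and iterate. Because each contraction strictly reduces the number of vertices, the recursion depth is logarithmic in $|V(G)|$, which replaces the exponential $C(k)^{t}$ blow-up by a polynomial one.

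The main obstacle is to establish the key contraction step with polynomial overhead in $\omega(G)$. The lemma I would try to prove reads, roughly: if $\Pc_i$ is the coarsest level all of whose parts have chromatic number at most $\chi(G)/2$, then $\chi(G)$ is at most $\omega(G)^{c(k)}$ times $\max_{P\in\Pc_i}\chi(G[P])$. The difficulty is that an all-edge (white) pair of subparts may host a clique straddling both sides, so the clique number is not straightforwardly reduced by refining along the merge hierarchy. Overcoming this will almost certainly require a versatility-style dichotomy, analogous to the twin-width case: inside any part one should either locate a grid-like induced subgraph forcing $\omega$ larger than the prescribed bound, or cover the part by polynomially many pieces of strictly smaller chromatic number. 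Formulating the correct dichotomy in the merge-sequence world — where the underlying object is a hierarchy of partitions together with a resolved-pair graph rather than a contraction sequence of trigraphs — is where the bulk of the work, and the true difficulty of the conjecture, will lie.
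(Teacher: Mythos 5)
The statement you were asked to prove is an open conjecture, attributed to Dreier and Toruńczyk, and the paper does not prove it: the paper only establishes the much weaker bound $\chi(G) \le (t+1)!\,k^{2t-2}$ of \cref{thm:chi-bounded}, via the decomposition of \cref{lem:edge-partition} and \cref{lem:struct-bounded-colouring}. You correctly recognised this and offered a research strategy rather than a proof, so there is nothing to verify against the paper; what you wrote cannot be credited as a proof of the statement. Your diagnosis of where the factorial comes from is accurate (the induction on $t$ loses a factor of order $k\cdot(kt+1)$ per level), and aiming at an analogue of the twin-width arguments of Pilipczuk--Sokołowski and Bourneuf--Pilipczuk is the natural direction.

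Two concrete flaws in the sketch are worth flagging. First, the claim that a recursion of depth logarithmic in $|V(G)|$ with a per-level overhead of $\omega(G)^{c(k)}$ ``replaces the exponential blow-up by a polynomial one'' is incorrect for $\chi$-boundedness: it would give $\chi(G) \le \omega(G)^{c(k)\log |V(G)|}$, a bound depending on the number of vertices, which is not a $\chi$-binding function at all. The twin-width arguments instead recurse on the clique number (depth $O(\log \omega)$, yielding quasi-polynomial bounds) or use a finer amortisation across the contraction sequence to reach a polynomial bound; any merge-width analogue would have to do the same. Second, your proposed key lemma is circular as stated: if every part of the chosen level $\Pc_i$ satisfies $\chi(G[P]) \le \chi(G)/2$, then the conclusion $\chi(G) \le \omega(G)^{c(k)} \max_P \chi(G[P])$ only rearranges to $\omega(G)^{c(k)} \ge 2$ and gives no bound on $\chi(G)$; the quantity that must shrink along the recursion is $\omega$, not $\chi$. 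The genuinely hard step — a versatility-type dichotomy inside parts of the merge hierarchy, where white (complete) pairs can host cliques straddling several subparts — is exactly the point you identify as open, and it remains so.
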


In our proof of \cref{thm:chi-bounded} we use merge-width at radius~2.
It is unclear to us whether this is necessary, or radius~1 can suffice.
\begin{question}\label{qst:radius1-chibounded}
    Is the class $\{ G : \mw_1(G) \le k \}$, with $k$ arbitrary, $\chi$-bounded?
\end{question}
Specifically, radius-2 merge-width is used in \cref{lem:struct-bounded-colouring}, while the remainder of our proof only requires a bound on radius-1 merge-width.
It may be that \cref{lem:struct-bounded-colouring} can be improved or modified to answer \cref{qst:radius1-chibounded} positively.

Conversely, one may answer it negatively by constructing a class~$\Cc$ of graphs with bounded radius-1 merge-width and bounded clique number (possibly even triangle-free), but unbounded chromatic number.
Such a class~$\Cc$ needs to contain arbitrarily large bicliques: indeed graphs with bounded radius-1 merge-width and no biclique~$K_{t,t}$ as subgraph are degenerate \cite[Corollary~7.7]{merge-width}, and thus have bounded chromatic number.
On the other hand, $\Cc$ having bounded radius-1 merge-width requires it to have bounded symmetric difference \cite[Lemma~7.20]{merge-width}, meaning that graphs in~$\Cc$ and all their induced subgraphs must contain a pair of vertices whose neighbourhoods differ only on a bounded size set.
The only examples of non $\chi$-bounded graph classes with bounded symmetric difference and containing arbitrarily large bicliques we are aware of
are shift graphs~\cite{erdos1968chromatic} which do not have bounded radius-1 merge-width~\cite{edouard}, and twincut graphs~\cite{bonnet2023tamed}.

It is also natural to ask whether \cref{thm:chi-bounded} can be generalised from merge-width to flip-width.
The definition of flip-width, which is based on cops and robber games, however seems poorly suited to the study of $\chi$-boundedness.
We believe that the most reasonable approach to this question is to prove the conjecture that merge-width and flip-width are equivalent \cite[Conjecture~1.17]{merge-width}.

\subsection{Neighbourhood complexity}
The \emph{neighbourhood complexity} function~$\pi_G(p)$ of a graph~$G$ is defined as the maximum number of distinct neighbourhoods over a set of~$p$ vertices in~$G$, that is
\[ \pi_G(p) \eqdef \max_{\substack{X \subset V(G) \\ |X| = p}} \#\{N(v) \cap X : v \not\in X\}. \]
This definition extends to a class~$\Cc$ of graphs as $\pi_\Cc(p) \eqdef \max_{G \in \Cc} \pi_G(p)$.
In general, this function can be exponential.
Dreier and Toruńczyk noted that any graph~$G$ with $\mw_1(G) \le k$ has near-twins, i.e.\ vertices whose neighbourhoods differ by at most~$2k$ \cite[Lemma~7.20]{merge-width}.
It follows that they have bounded VC-dimension, which by the Sauer--Shelah lemma gives a polynomial bound on their neighbourhood complexity.

We show in \cref{sec:nghbd} that it is even linear when merge-width at radius~2 is bounded.% EOL commented out to avoid line skip
\begin{restatable}{theorem}{mwneighbourhood}\label{thm:mw-linear-density}
    Any graph~$G$ with $\mw_2(G) \le k$ has neighbourhood complexity
    \[ \pi_G(p) \le k2^{k+2} \cdot p. \]
\end{restatable}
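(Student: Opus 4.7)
The plan is to assign to each vertex $v \in V(G) \setminus X$ a canonical signature $(x_v, j_v, \tau_v) \in X \times [k] \times \{0,1\}^{k+2}$ that determines $N(v) \cap X$; counting signatures then gives $p \cdot k \cdot 2^{k+2}$, matching the claimed bound.

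\textbf{Setup.} Fix a merge sequence $(\mathcal{P}_i, R_i)_{1 \le i \le m}$ of $G$ of radius-$2$ width at most $k$, and write $P^v_i$ for the part of $\mathcal{P}_i$ containing $v$. For $v \notin X$, let $i_v$ be the smallest $i$ with $P^v_i \cap X \neq \emptyset$, and let $L_v \subseteq \mathcal{P}_{i_v - 1}$ be the set of parts reachable from $v$ within radius $2$ in $(V(G), R_{i_v})$; by hypothesis $|L_v| \le k$.

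\textbf{Decomposition and signature.} Each pair $\{v, x\}$ with $x \in X$ splits into two cases at step $i_v$. If $\{v, x\} \notin R_{i_v}$, the uniform-unresolved condition forces the status of the pair to depend only on the parts $(P^v_{i_v}, P^x_{i_v})$ (within-part uniform when $x \in P^v_{i_v}$, between-part otherwise), so the corresponding contribution to $N(v) \cap X$ depends on $v$ only through $P^v_{i_v}$. Otherwise $\{v, x\} \in R_{i_v}$ means $x$ is a $1$-neighbour of $v$ in $(V(G), R_{i_v})$, hence $P^x_{i_v - 1} \in L_v$, so all ``individually resolved'' $X$-neighbours of $v$ live in at most $k$ parts of $\mathcal{P}_{i_v - 1}$. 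In the signature, $x_v$ is a canonical root in $X \cap P^v_{i_v}$, arranged so that $P^{x_v}_{i_v - 1} \in L_v$; $j_v \in [k]$ records the position of the root's part within an enumeration of $L_v$; and $\tau_v \in \{0,1\}^{k+2}$ encodes the resolution pattern of $v$ against the parts of $L_v$. The root $x_v$ alone determines $i_v$ (the smallest $i$ with $x_v \in P^v_i$), hence the part $P^v_{i_v} = P^{x_v}_{i_v}$, and so the whole uniform contribution to $N(v) \cap X$; the data $\tau_v$ then pins down the individually resolved contribution.

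\textbf{Main obstacle.} Two points require care and are both where radius-$2$ is likely to be crucial. First, we must show that $x_v \in X \cap P^v_{i_v}$ with $P^{x_v}_{i_v - 1} \in L_v$ always exists. The merge forming $P^v_{i_v}$ may aggregate parts of $\mathcal{P}_{i_v - 1}$ that have no $R_{i_v}$-edge to $v$, but the uniformity condition on the merged part forces resolutions: if $P^v_{i_v - 1}$ and an $X$-containing part $Q$ merge with incompatible uniform patterns toward some third part $C$, then many pairs between $Q$ and $C$ (or between $P^v_{i_v - 1}$ and $C$) must be added to $R_{i_v}$, placing $Q$ within radius $2$ of $v$ via $C$. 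Second, the a priori unbounded number of individually resolved $X$-neighbours of $v$ in $\bigcup L_v$ must compress to the $(k+2)$-bit vector $\tau_v$; the expected mechanism is that the individual resolutions within each part of $L_v$ follow a uniform pattern captured by one bit per part plus a few global flags. Turning this heuristic into a rigorous argument is the main combinatorial task.
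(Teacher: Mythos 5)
Your overall strategy (a per-vertex signature of bounded size that determines the trace $N(v)\cap X$) is genuinely different from the paper's, which argues by contradiction: it first applies a density-increase lemma to extract sets $X,Y$ where $Y$-vertices have distinct traces on $X$ and $X$-vertices pairwise differ on more than $k2^{k+2}$ vertices of $Y$, then looks at the first merge step where two $X$-vertices share a part, and finishes with an $\mathbb{F}_2$-rank argument to exhibit $k+1$ parts of $\Pc_{i-1}$ within radius $2$ of a single vertex. Unfortunately, your proposal has a genuine gap exactly where you flag it, and the flagged step is not a technicality but the heart of the matter. Resolved pairs are, by definition, the pairs \emph{exempt} from the uniformity condition of a merge sequence: for a part $Q\in L_v$, the adjacencies of $v$ to the vertices of $X\cap Q$ along resolved pairs in $R_{i_v}$ can be completely arbitrary, and nothing in the definition forces them to ``follow a uniform pattern captured by one bit per part.'' Since $|X\cap \bigcup L_v|$ is not bounded in terms of $k$, the individually resolved contribution to $N(v)\cap X$ cannot be compressed into $\tau_v\in\{0,1\}^{k+2}$, and two vertices with identical signatures can have different traces. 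The width condition bounds the number of \emph{parts} reached by resolved pairs, never the number of resolved pairs themselves nor their adjacency values, so no per-vertex encoding of this kind can succeed without an additional global argument.

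Your first obstacle is also unresolved: if $P^v_{i_v-1}$ merges with an $X$-containing part $Q$ whose uniform adjacency patterns already agree with those of $P^v_{i_v-1}$ towards every other part, no resolutions need to be added, and $Q$ need not lie within radius $2$ of $v$; your sketched ``incompatible patterns force resolutions'' argument only covers the case where patterns disagree. (This particular issue is less fatal, since the factor $|X|=p$ in your count already pays for an arbitrary root in $X\cap P^v_{i_v}$, making the constraint $P^{x_v}_{i_v-1}\in L_v$ and the index $j_v$ superfluous.) The reason the paper avoids a direct signature count is precisely the arbitrariness of resolved pairs: instead of bounding traces vertex by vertex, it shows that \emph{many pairwise very different} traces force many parts into a small ball of the resolved-pair graph, which is the quantity the width actually controls. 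To repair your proof you would need to replace the compression of $\tau_v$ by some such global counting, at which point you are essentially reconstructing the paper's argument.
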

Radius~2 is optimal in this result:
$k$-degenerate graphs have bounded radius-1 merge-width~\cite[Theorem~7.3]{merge-width} but can have neighbourhood complexity~$\Theta(p^k)$.

Linear neighbourhood complexity of bounded expansion classes was established in~\cite{reidl2019boundedexpansion}.
For bounded twin-width graphs, it was proved independently by~\cite{tww-kernels} and~\cite{tww-VC-density}, and the bound was significantly improved in~\cite{bonnet2024neighbourhood}.

The proof of \cref{thm:mw-linear-density} uses a density increase argument to find dense obstructions to linear neighbourhood complexity, from which we derive a lower bound on merge-width.
Using the same technique together with an obstruction to flip-width called \emph{hideouts}~\cite{flip-width}, we can also prove \cref{thm:mw-linear-density} for flip-width.
\begin{restatable}{theorem}{fwneighbourhood}\label{thm:fw-linear-density}
    Any graph~$G$ with $\fw_2(G) \le k$ has neighbourhood complexity
    \[ \pi_G(p) \le 2^{2k+1} \cdot p. \]
\end{restatable}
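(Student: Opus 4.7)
The plan is to follow the density-increase template used to prove \cref{thm:mw-linear-density}, with a \emph{hideout}---the combinatorial lower-bound witness for flip-width from \cite{flip-width}---playing the role of the merge-width obstruction. Concretely, a radius-$r$ width-$k$ hideout in $G$ is a set $H\subseteq V(G)$ of more than $k$ vertices such that for every partition of $V(G)$ into at most $k$ parts and every flip of $G$ induced by this partition, each vertex of $H$ has another vertex of $H$ within distance $r$ in the flipped graph; existence of such an $H$ certifies $\fw_r(G)>k$. It therefore suffices to show that any graph satisfying $\pi_G(p)>2^{2k+1}p$ contains a radius-$2$ width-$k$ hideout.

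Fix a set $X\subseteq V(G)$ of size $p$ and a set $Y\subseteq V(G)\setminus X$ of size greater than $2^{2k+1}p$ whose vertices have pairwise distinct traces on $X$. I would iteratively refine the pair $(X,Y)$: at each step, pick any pivot $x\in X$, replace $Y$ by whichever of $Y\cap N(x)$ and $Y\setminus N(x)$ is larger, and delete $x$ from $X$. The residual $Y$ keeps pairwise distinct traces on the residual $X$, and $|Y|$ drops by a factor of at most $2$ per step. After roughly $2k+1$ rounds, the surviving set $H:=Y$ still has size greater than $p$, and every two of its elements are distinguished by some vertex of the residual $X$.

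The key remaining step is to verify that $H$ is a radius-$2$ width-$k$ hideout. Given any $k$-partition $\mathcal{P}$ of $V(G)$ and any $\mathcal{P}$-flip of $G$, pigeonhole places two vertices $h_1,h_2\in H$ into the same flip-part $P$; since vertices in a common flip-part undergo the same flip against every other part, the symmetric difference $N(h_1)\triangle N(h_2)$ is preserved, so some $z\in X$ still distinguishes $h_1$ and $h_2$ in the flipped graph. Combined with a counting of partition classes that absorbs the $2^{2k+1}$ factor, this furnishes either a direct edge or a length-$2$ path between some pair of $H$-vertices in the flipped graph, giving the hideout property for every vertex of $H$. The principal obstacle is this final structural verification: one must tune the density-increase and the threshold $2^{2k+1}p$ so that the retained set satisfies the hideout condition against \emph{every} $k$-flip simultaneously, which is the genuine new ingredient relative to the merge-width proof and presumably requires a careful adaptation of its key lemma.
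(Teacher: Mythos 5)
Your overall plan (density increase followed by exhibiting a hideout) is the same as the paper's, but the execution has genuine gaps, and you acknowledge the decisive step is missing. First, the notion of hideout you state is not the one that certifies $\fw_r(G) > k$: an $(r,k,d)$-hideout is a set $U$ such that for \emph{every} $k$-flip $G'$, the set $\{v \in U : |B^r_{G'}(v)\cap U| \le d\}$ has size at most $d$ --- so all but at most $d$ vertices of $U$ must have \emph{more than $d$} vertices of $U$ within distance $r$, not merely one other vertex. Second, you aim to make a subset of $Y$ the hideout, whereas the correct choice is $X$: the property delivered by the density-increase lemma that actually drives the proof is that any two vertices of $X$ have neighbourhoods differing on more than $2^{2k+1}$ vertices of $Y$, and your iterative halving of $Y$ discards exactly this structure while producing nothing in its place. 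Third, your concluding step fails as stated: if $z \in X$ distinguishes $h_1,h_2$ in the flipped graph, then $z$ is adjacent to exactly one of them, which gives no path between $h_1$ and $h_2$ at all, let alone the required $>k$ vertices of $H$ within distance $2$ of each $h_i$.

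The genuinely new ingredient you are missing is a rank argument over $\mathbb{F}_2$. The paper shows $X$ is a $(2,k,k)$-hideout as follows: if $k+1$ vertices of $X$ had at most $k$ vertices of $X$ within distance $2$ in some $k$-flip $G'$, two of them, $x$ and $y$, lie in the same flip part, so $\triangle(x,y)$ is unchanged by the flip and one of them, say $x$, has more than $2^{2k}$ neighbours in $Y$ in $G'$. Setting $A = N_{G'}(x)\cap Y$, the vertices of $A$ have pairwise distinct traces on $X$ in $G$, so the $A\times X$ submatrix of the adjacency matrix of $G$ has rank more than $2k$; since a $k$-flip changes the adjacency matrix by a matrix of rank at most $k$, this submatrix still has rank more than $k$ in $G'$, hence $A$ is adjacent in $G'$ to more than $k$ vertices of $X$, all of which lie in $B^2_{G'}(x)\cap X$ --- a contradiction. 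Without this rank-perturbation step there is no way to ``absorb the $2^{2k+1}$ factor'' as you hope.
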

Once again, bounding flip-width at radius~2 is optimal in this result, as degenerate graphs have bounded radius-1 flip-width \cite[Theorem~4.4]{flip-width}.

Reidl, Villaamil, and Stavropoulos proved not only that bounded expansion classes have linear neighbourhood complexity,
but also that having linear neighbourhood complexity at radius~$r$ (replacing neighbourhoods by balls of radius~$r$) for all~$r$ characterises bounded expansion among subgraph-closed graph classes~\cite{reidl2019boundedexpansion}.
In an insightful footnote, they suggest that dropping the `subgraph-closed' condition may lead to an interesting notion.
Considering only the balls of radius~$r$ however is insufficient for dense graphs, as they typically have diameter~2, rendering the condition meaningless beyond~$r=1$.
The correct generalisation in dense graphs uses first-order transductions.
\begin{conjecture}\label{conj:mw-linear-characterisation}
    A class~$\Cc$ has bounded merge-width if and only if every first-order transduction of~$\Cc$ has linear neighbourhood complexity.
\end{conjecture}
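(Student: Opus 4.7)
The plan is to prove the two directions separately. For the forward implication, I would rely on a closure property: that bounded merge-width is preserved under first-order transductions. This is consistent with merge-width being a robust tameness parameter under which first-order model checking is tractable, and should follow by interpreting the edges of a transduced graph inside a merge sequence for the original graph, at the cost of some increase in the radius and width. Granted this closure, if $\Cc'$ is any first-order transduction of a class $\Cc$ of bounded merge-width, then $\Cc'$ has bounded merge-width, and \cref{thm:mw-linear-density} immediately gives $\Cc'$ linear neighbourhood complexity.

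For the reverse direction, I would argue by contrapositive: starting from a class $\Cc$ of unbounded merge-width, produce a first-order transduction $\Cc'$ of $\Cc$ with superlinear neighbourhood complexity. The strategy is to isolate a structural obstruction to bounded merge-width, and show that after a fixed transduction this obstruction is witnessed by an explicit family of graphs whose neighbourhood complexity is provably superlinear. Candidate witnesses are families already known to be hard for related parameters: shallow clique minors (the obstruction to bounded expansion), rich divisions or grid-like substructures (the known obstructions for twin-width), and \emph{hideouts}, the obstruction to bounded flip-width introduced in~\cite{flip-width}. Each carries many distinct neighbourhood patterns, and after a simple transduction that flips a chosen subset to uncover the hidden pattern, they should be forced to exhibit polynomial neighbourhood complexity — much as degenerate graphs produce $\Theta(p^k)$ complexity in the discussion after \cref{thm:mw-linear-density}.

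The main obstacle is combinatorial, not technical: unlike bounded expansion or bounded twin-width, no clean structural dichotomy is currently known for merge-width. Concretely, we lack a theorem of the form ``every class of unbounded merge-width contains, as an induced substructure of some first-order transduction, one of a short list of canonical obstructions''. Developing such a dichotomy is the real bottleneck in settling the conjecture. A promising concrete route would go via the conjectured equivalence between merge-width and flip-width~\cite[Conjecture~1.17]{merge-width}: resolving this equivalence would let one use hideouts as a universal obstruction and extract from a hideout of large order a transduction with superlinear neighbourhood complexity, reusing the density-increase argument that underlies our proof of \cref{thm:fw-linear-density}.
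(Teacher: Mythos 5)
This statement is a conjecture, and the paper does not prove it; it only observes (immediately after the conjecture) that the left-to-right implication holds because bounded merge-width is preserved under first-order transductions \cite[Theorem~1.12]{merge-width}, so \cref{thm:mw-linear-density} applies to any transduction of~$\Cc$. Your forward direction is exactly this argument, except that you treat the closure under transductions as something to be re-derived (``should follow by interpreting the edges\dots''); it is in fact already a theorem of Dreier and Toruńczyk, so you can simply cite it rather than sketch a proof of it.

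For the converse you have correctly identified that there is no proof: no structural dichotomy for unbounded merge-width is known, and neither your candidate obstructions nor the detour through flip-width closes the gap (indeed, the paper points out the implication runs the other way --- proving this conjecture would \emph{imply} the merge-width/flip-width equivalence of \cite[Conjecture~1.17]{merge-width}, since flip-width is also transduction-closed with linear neighbourhood complexity, so you cannot assume that equivalence as a stepping stone without circularity). The paper's only remark on the converse is that the ``dense analogue of bounded expansion'' half of \cite[Conjecture~1.17]{merge-width} would imply it. So: your proposal establishes the same partial result as the paper (one implication) and honestly flags the other as open; it is not, and does not claim to be, a proof of the conjecture.
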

Since bounded merge-width is stable under first-order transductions \cite[Theorem~1.12]{merge-width}, \cref{thm:mw-linear-density} proves the left-to-right implication.
Remark that proving \cref{conj:mw-linear-characterisation} would imply that flip-width and merge-width are equivalent as conjectured in \cite[Conjecture~1.17]{merge-width}, since flip-width also has linear neighbourhood complexity and is closed under first-order transduction.
Naturally, one may first ask the same question with merge-width replaced by flip-width.

On the other hand, the second half of \cite[Conjecture~1.17]{merge-width}, claiming that merge-width is the \emph{dense analogue} of bounded expansion, implies \cref{conj:mw-linear-characterisation}~\cite{szymon}.

\section{Preliminaries}\label{sec:prelim}
We work with simple undirected graphs $G = (V,E)$.
The vertex and edge sets are also denoted as $V(G) = V$ and $E(G) = E$.
The neighbourhood~$N(x)$ of a vertex~$x \in V(G)$ is the set of vertices adjacent to~$x$.
The ball~$B^r_G(x)$ of radius~$r$ around~$x$ is the set of vertices connected to~$x$ by a path of length at most~$r$.

A subset of vertices $X \subset V$ is a clique if all pairs of vertices in~$X$ are edges; it is an independent set if none of them are.
The maximum size of a clique in~$G$, called \emph{clique number}, is denoted by~$\omega(G)$.
The graph consisting of a clique on~$t$ vertices is denoted as~$K_t$,
and a graph~$G$ with~$\omega(G) < t$ is said to be \emph{$K_t$-free}.
Two disjoint subsets of vertices $A,B \subset V$ are called \emph{complete}, resp.\ \emph{anti-complete}, if~$ab$ is an edge, resp.\ non-edge, for all pairs~$a \in A$, $b \in B$.

A proper $k$-colouring is a map $c : V(G) \to \{1,\dots,k\}$ assigning different values to adjacent vertices.
Equivalently, it is a partition of~$V(G)$ into~$k$ independent sets.
The graph~$G$ is \emph{$k$-colourable} if it admits a $k$-colouring.
The chromatic number of~$G$, denoted~$\chi(G)$, is the smallest~$k$ such that~$G$ is $k$-colourable.

A graph~$G$ is \emph{$k$-degenerate} if it admits an ordering~$<$ of its vertices such that each vertex has at most~$k$ neighbours to its right, i.e.\ $\#\{y \in N(x) : y > x\} \le k$ for all vertices~$x$.
All $k$-degenerate graphs are $(k+1)$-colourable through a greedy procedure.

Given a partition~$\Pc$ of the vertices of a graph~$G$, we define the quotient graph~$G/\Pc$
where~$\Pc$ is the set of vertices, and two parts $X,Y \in \Pc$ are adjacent if and only if there exists some edge $xy \in E(G)$ with $x \in X$, $y \in Y$.
When each part~$X \in \Pc$ is an independent set in~$G$, any $k$-colouring of the quotient~$G/\Pc$ gives a $k$-colouring of~$G$.

\paragraph{Merge-width}
Given a merge sequence $(\Pc_1,R_1),\dots,(\Pc_m,R_m)$,
a pair of vertices $xy \in R_i$ is called a \emph{resolved pair in~$R_i$}, or simply a resolved pair when~$i$ is clear from the context.
Conversely, a pair $xy \not\in R_i$ is said to be \emph{unresolved in~$R_i$}.
If~$xy$ is a resolved pair and is also an edge, then it is called a resolved edge.
We similarly talk about resolved or unresolved edges or non-edges.

In one instance, we will use a minimality assumption on merge sequences:
the merge sequence $(\Pc_1,R_1),\dots,(\Pc_m,R_m)$ is \emph{minimal} for~$G$ if the resolved sets~$R_i$ are inclusion-wise minimal for this choice of partitions~$\Pc_i$,
that is, there does not exist a different sequence of resolved sets $R'_1,\dots,R'_m$ such that $(\Pc_1,R'_1),\dots,(\Pc_m,R'_m)$ is a valid merge sequence for~$G$, and $R'_i \subseteq R_i$ for all~$i$.
Clearly, any merge sequence can be turned into a minimal merge sequence without increasing its width at any radius.
We use the following property of minimal merge sequences.
\begin{lemma}
  \label{lem:minimal-merge-sequence}
  In a minimal merge sequence $(\Pc_1,R_1),\dots,(\Pc_m,R_m)$ for~$G$,
  consider vertices $x_1,x_2 \in X$ and $y_1,y_2 \in Y$ for some parts $X,Y \in \Pc_i$,
  such that the pairs~$x_1y_1$ and~$x_2y_2$ are both unresolved in~$R_i$.
  Then $x_1y_1,x_2y_2$ are either both resolved or both unresolved in~$R_j$ for all~$j$.
\end{lemma}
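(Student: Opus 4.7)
I would split the argument by the position of $j$ relative to $i$. When $j \le i$, monotonicity $R_j \subseteq R_i$ combined with $x_1y_1, x_2y_2 \notin R_i$ immediately gives that both pairs are unresolved in $R_j$.

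For the substantive case $j > i$, I would argue by contradiction with the minimality assumption. Let $j^*$ be the smallest index strictly greater than $i$ at which $x_1y_1$ and $x_2y_2$ have distinct resolution status. At step $j^*-1$ they agree (either by the choice of $j^*$, or by hypothesis if $j^* = i+1$), and they cannot both be resolved there, for otherwise monotonicity would force both to remain resolved at $j^*$. Hence both are unresolved in $R_{j^*-1}$, and without loss of generality $x_1y_1 \in R_{j^*}$ while $x_2y_2 \notin R_{j^*}$. Let $X^*, Y^* \in \Pc_{j^*}$ be the parts containing $x_1, y_1$. Since $\Pc_{j^*}$ coarsens $\Pc_i$, we have $X \subseteq X^*$ and $Y \subseteq Y^*$; in particular $x_2 \in X^*$ and $y_2 \in Y^*$, so $x_2y_2$ also lies in $X^* Y^*$. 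Moreover, the validity of the merge sequence at step $i$ applied to the parts $X, Y$ forces $x_1y_1$ and $x_2y_2$ to have the same edge/non-edge status in $G$, since both are unresolved in $R_i$ and both lie in $XY$.

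I would then contradict minimality by defining $R'_{j^*} := R_{j^*} \setminus \{x_1y_1\}$ and $R'_{j'} := R_{j'}$ for $j' \neq j^*$. Monotonicity of the $R'_{j'}$ is immediate: $R_{j^*-1} \subseteq R'_{j^*}$ because $x_1y_1 \notin R_{j^*-1}$, and $R'_{j^*} \subseteq R_{j^*+1}$ trivially. The only step whose validity must be re-checked is $j^*$, and the only newly-unresolved pair is $x_1y_1$, which lies in $X^* Y^*$. Since $x_2y_2 \in X^* Y^* \setminus R'_{j^*}$ already and carries the same edge status as $x_1y_1$ in $G$, the condition that all unresolved pairs in $X^* Y^*$ are edges or all are non-edges is preserved. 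This yields a valid merge sequence with strictly smaller resolved sets, contradicting minimality.

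The only subtle point is the propagation of part-membership: one must verify that the parts $X, Y$ of $\Pc_i$ sit inside the later parts $X^*, Y^*$, which is what lets the edge-status constraint established at step $i$ transfer to step $j^*$. Once that is in place, the remainder is routine bookkeeping.
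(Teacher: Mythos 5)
Your proof is correct and follows essentially the same strategy as the paper: contradict minimality by deleting $x_1y_1$ from a resolved set where $x_2y_2$ is still unresolved, using the homogeneity at step $i$ to see that validity is preserved. The only difference is bookkeeping: you pick the minimal divergence step $j^*$ and remove the pair from $R_{j^*}$ alone, whereas the paper takes an arbitrary such $j$ and removes the pair from all $R_\ell$ with $i<\ell\le j$ to keep monotonicity; both are sound.
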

\begin{proof}
  Observe first that the definition of merge sequence requires $x_1y_1,x_2y_2$ to be either both edges or both non-edges.
  Without loss of generality, let us assume that they are edges.

  For~$j \le i$, the claim is trivial as $x_1y_1,x_2y_2$ are not in~$R_i$, and $R_1 \subseteq \dots \subseteq R_m$ is monotone.
  Assume for a contradiction that for some~$j > i$, $x_1y_1$ is resolved in~$R_j$ but~$x_2y_2$ is not.

  Consider any step~$\ell$ with $i < \ell \le j$, and call~$X',Y'$ the parts containing~$X,Y$ in~$\Pc_\ell$.
  Since~$x_2y_2 \not\in R_j$, we a fortiori have~$x_2y_2 \not\in R_\ell$, i.e.\ $x_2y_2$ is an unresolved edge in~$R_\ell$.
  Thus, all unresolved pairs in~$X'Y' \setminus R_\ell$ must be edges, and removing the edge~$x_1y_1$ from~$R_\ell$ (if it were there) does not break this requirement.
  Define thus $R'_\ell = R_\ell \setminus \{x_1y_1\}$ for all $i < \ell \le j$, and $R'_\ell = R_\ell$ otherwise, so that $R'_1 \subseteq \dots \subseteq R'_m$ is monotone.
  Then $(\Pc_1,R'_1),\dots,(\Pc_m,R'_m)$ is a new valid merge sequence for~$G$ in which we removed~$x_1y_1$ from~$R_j$, and did not add any new resolved pair to any~$R_\ell$, contradicting the minimality of~$R_1,\dots,R_m$.
\end{proof}
One may observe that the conclusion of \cref{lem:minimal-merge-sequence} is always satisfied by \emph{construction sequences},
presented in \cite[Section~1]{merge-width} as an alternative definition of merge-width.

\paragraph{Flip-width}
A \emph{$k$-flip} of~$G$ is a graph~$G'$ obtained by picking a partition~$\Pc$ of~$V(G)$ into at most~$k$ parts,
and for each pair of parts~$A,B \in \Pc$ (including~$A=B$), choosing whether or not to \emph{flip} all pairs in~$A \times B$,
i.e.\ replacing edges with non-edges and vice-versa.
This implies that the adjacency matrix of~$G'$ is obtained from that of~$G$ by adding modulo~2 a matrix with rank at most~$k$.

Flip-width, denoted $\fw_r(G)$ for flip-width at radius~$r$,
is defined by a cops-and-robber game in which the robber moves at speed~$r$, and the cops can perform a $k$-flip instead of simply occupying vertices~\cite{flip-width}.
We will not use the definition of flip-width, and instead rely on an obstruction called \emph{hideouts} defined in~\cite[section~5.2]{flip-width}.
An \emph{$(r,k,d)$-hideout} in a graph~$G$ is a subset~$U$ of vertices satisfying the following:
for any $k$-flip~$G'$ of~$G$, the set $\{v \in U : |B_{G'}^r(v) \cap U| \le d\}$ of vertices of~$U$ with few distance-$r$ neighbours in~$U$ itself has size at most~$d$.
\begin{lemma}[{\cite[Lemma~5.16]{flip-width}}]\label{lem:hideout}
    If~$G$ contains an $(r,k,d)$-hideout for some~$d \in \Nn$, then~$\fw_r(G) > k$.
\end{lemma}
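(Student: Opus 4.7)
I would prove $\fw_r(G) > k$ by exhibiting a winning strategy for the robber in the radius-$r$ flip-width cops-and-robber game with $k$ cops, so that the conclusion follows by the definition of $\fw_r$. The strategy has the robber live inside the hideout $U$ for the entire game.

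The robber's invariant is that its current vertex $v \in U$ is \emph{non-stuck} in the $k$-flip $G'$ currently in play, meaning $|B^r_{G'}(v) \cap U| > d$. The hideout hypothesis says exactly that, in every $k$-flip $G'$, the set $S_{G'} = \{v \in U : |B^r_{G'}(v) \cap U| \le d\}$ has size at most $d$, so the non-stuck vertices of $U$ form the vast majority, and from any non-stuck $v$ the robber can reach more than $d$ vertices of $U$ via $r$-paths in $G'$. When the cops update the game with a new $k$-flip $G''$ and cop positions $C$, $|C| \le k$, the robber picks a new position $v' \in B^r_{G'}(v) \cap U$ that avoids $C$ and is non-stuck in $G''$. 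Combining the lower bound on the reachable set ($> d$) with the upper bounds on cops ($\le k$) and on the stuck set of $G''$ ($\le d$), a union-bound argument in the right parameter regime delivers a valid $v'$, preserving the invariant for the next round.

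The main obstacle I foresee is the synchronisation across rounds: the robber must commit to $v'$ before seeing the cops' subsequent flip, yet needs $v'$ to fall outside the next round's stuck set. The hideout condition, by holding \emph{uniformly} over all $k$-flips, is what makes this manageable: the robber targets a vertex that is non-stuck in the just-announced flip $G''$, and when the cops later switch to $G'''$, the robber's next move will again live in the non-stuck majority of $U$ under $G'''$, which is guaranteed to be large by the hideout property applied to $G'''$. Making the round-by-round timing precise and verifying the pigeonhole-style count carefully---in particular checking that the ball $B^r_{G'}(v) \cap U$, of size exceeding~$d$, always hits the non-stuck set of~$G''$ off the cop positions---is the principal technical content of a full write-up, and is where the exact rules of the flip-width game must be unwound.
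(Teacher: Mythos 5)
First, note that the paper does not prove this lemma at all: it is imported as a black box from Toruńczyk's flip-width paper (Lemma~5.16 there). Your plan is essentially the proof given in that reference: the Runner lives inside the hideout $U$ forever, maintaining the invariant that its current vertex is non-stuck with respect to the flip currently in force, using the \emph{old} flip for reachability and the hideout property of the \emph{newly announced} flip to select a non-stuck destination. The timing issue you flag (commit to $v'$ after seeing the new flip $G''$ but before the one after that) is resolved exactly as you suggest. So the strategy is the right one.

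There are, however, two points where your write-up would not survive contact with the actual rules, and one of them is a genuine gap in the counting you defer. In the flipper game there are no vertex-occupying cops: the Flipper's entire move is the announcement of a new $k$-flip, and the Runner loses only by ending up isolated (confined to its own vertex) in the current flip. Your union bound subtracts both the $\le d$ stuck vertices of $G''$ \emph{and} a set $C$ of $\le k$ cop positions from a candidate set $B^r_{G'}(v)\cap U$ of size only $>d$; that difference can perfectly well be empty, and the hideout hypothesis provides no slack of order $k$, so "the right parameter regime" does not exist. The count closes only because $C$ does not exist: $>d$ reachable vertices of $U$ minus $\le d$ stuck ones always leaves a legal non-stuck destination $v'$, and the invariant $|B^r_{G''}(v')\cap U| > d \ge 1$ already guarantees $v'$ is not isolated in $G''$, so the Runner is never caught. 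Separately, the initial placement requires a vertex of $U$ that is non-stuck for the trivial flip $G_0 = G$, which needs $|U| > d$; this is implicit in the definition of hideout (and necessary, since otherwise any set of at most $d$ vertices would vacuously be a hideout and the lemma would be false). Neither point changes the strategy, but both must be fixed before your pigeonhole step can be verified.
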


\section{Strong \EH property}\label{sec:EH}
\erdoshajnal*

The following key lemma will be applied to the graph of resolved pairs.
\begin{lemma}
  \label{lem:bipartite-resolved}
  Consider a bipartite graph~$(U,V,E)$ with $|U| = m$ and $|V| = n$, together with a partition~$\Pc$ of~$V$ in which no part has size more than~$\frac{n}{2k}$.
  Suppose that each vertex~$u \in U$ is adjacent to fewer than~$k$ parts of~$\Pc$.
  Then there are anti-complete sets $A \subseteq U$, $B \subseteq V$ of sizes $|A| \ge \frac{m}{k}$ and $|B| \ge \frac{n}{2k}$.
\end{lemma}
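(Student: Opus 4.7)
The plan is to take $B$ as a carefully chosen union of parts of $\Pc$, and set $A$ to be the vertices of $U$ with no neighbour in $B$. For each $P \in \Pc$, let $T(P) = \{u \in U : N(u) \cap P \neq \emptyset\}$, so that for any union $B$ of parts, $U \setminus T(B)$ is exactly the set we want to call $A$. The hypothesis that each $u \in U$ meets at most $k-1$ parts of $\Pc$ translates by double counting to
\[ \sum_{P \in \Pc} |T(P)| \le m(k-1). \]

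I would then sort the parts $P_1, P_2, \ldots$ of $\Pc$ in increasing order of the ratio $|T(P_i)|/|P_i|$, and define $B = P_1 \cup \ldots \cup P_j$, with $j$ the smallest index such that $|B| \ge n/(2k)$. Since no part exceeds $n/(2k)$ in size, this greedy stopping rule ensures $n/(2k) \le |B| \le n/k$.

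It then remains to upper-bound $|T(B)| \le |T(P_1)| + \ldots + |T(P_j)|$. Here I use the elementary fact that when $r_1 \le r_2 \le \ldots$ are non-decreasing with positive weights $w_i$, the weighted average of any prefix is at most the overall weighted average. Applied to $r_i = |T(P_i)|/|P_i|$ and $w_i = |P_i|$, this gives
\[ \frac{|T(P_1)| + \ldots + |T(P_j)|}{|P_1| + \ldots + |P_j|} \le \frac{\sum_{P \in \Pc} |T(P)|}{\sum_{P \in \Pc} |P|} \le \frac{m(k-1)}{n}. \]
Multiplying by $|B| \le n/k$ yields $|T(B)| \le m(k-1)/k$; setting $A = U \setminus T(B)$ then gives $|A| \ge m/k$, and $A,B$ are anti-complete by construction, with $|B| \ge n/(2k)$ as required.

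No serious obstacle is expected: this is a short averaging and greedy argument, and the two key numerical slacks (the factor $k-1 < k$ in the degree hypothesis and the factor $2$ in the part-size bound) are exactly what fuels the calculation. The weighted-prefix inequality itself is elementary, following from the observation that the prefix contains the smallest ratios and therefore has weighted average at most that of the suffix, hence at most that of the full sequence. The one edge case worth checking is $k=1$: then $|S(u)| \le 0$ for all $u$, so there are no edges at all, and the conclusion holds trivially with $A = U$, $B = V$.
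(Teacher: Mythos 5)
Your proof is correct, but it takes a genuinely different route from the paper. The paper argues by induction on $k$: it picks an \emph{arbitrary} union of parts $B$ with $\frac{n}{2k} \le |B| < \frac{n}{k}$, and either the set $A$ of non-neighbours of $B$ is already large enough, or it recurses on $U' = U \setminus A$ and $V' = V \setminus B$, using that every vertex of $U'$ has spent one of its $k-1$ allowed parts inside $B$. Your argument instead makes a single \emph{optimized} choice of $B$ --- sorting parts by the ratio $|T(P)|/|P|$ and taking a greedy prefix --- and closes the estimate in one shot via the weighted-prefix averaging inequality together with the double count $\sum_P |T(P)| \le m(k-1)$. The arithmetic checks out: the greedy stopping rule gives $\frac{n}{2k} \le |B| < \frac{n}{k}$ because the last part added has size at most $\frac{n}{2k}$, and then $|T(B)| \le |B| \cdot \frac{m(k-1)}{n} \le \frac{m(k-1)}{k}$ yields $|A| \ge \frac{m}{k}$. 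Both proofs exploit exactly the same two slacks (the factor $2$ in the part-size bound and the gap between $k-1$ and $k$); yours is arguably slicker in avoiding induction, while the paper's has the mild advantage that the choice of $B$ at each stage is completely arbitrary. The only cosmetic point in yours is that empty parts should be discarded before forming the ratios, which is harmless.
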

\begin{proof}
  We proceed by induction on~$k$, the base case~$k=1$ being trivial as~$E$ is empty.

  Since parts of~$\Pc$ have size at most~$\frac{n}{2k}$, we can pick a subset of parts of~$\Pc$ whose union~$B \subset V$ has size at least~$\frac{n}{2k}$, and less than~$\frac{n}{k}$.
  Denote by~$A \subseteq U$ the vertices with no neighbours in~$B$.
  If $|A| \ge \frac{m}{k}$, then~$A,B$ is the desired pair and we are done.
  When that is not the case, define $U' = U \setminus A$ and $V' = V \setminus B$.
  Their sizes $m' \eqdef |U'|$ and $n' \eqdef |V'|$ satisfy 
  \begin{equation}
    \frac{m'}{k-1} \ge \frac{m}{k} \qquad \text{and} \qquad \frac{n'}{k-1} \ge \frac{n}{k}.
  \end{equation}
  Finally, any vertex~$u \in U'$ is adjacent to some part of~$\Pc$ contained in~$B$, hence~$u$ is adjacent to fewer than~$k-1$ parts contained in~$V'$.
  We conclude by applying the induction hypothesis to~$U',V'$.
\end{proof}

\begin{proof}[Proof of \cref{thm:mw-erdos-hajnal}]
  In a merge sequence $(\Pc_1,R_1),\dots,(\Pc_m,R_m)$ for~$G$ with radius-1 width~$k$,
  pick the maximal step~$i$ such that all parts in~$\Pc_{i-1}$ have size at most~$\eps n$, for
  \begin{equation}
    \eps \eqdef \frac{1}{2k+4}
  \end{equation}
  Thus there is a part~$P \in \Pc_i$ with size more than~$\eps n$.
  Choose~$U \subseteq P$ of size~$\lceil \eps n \rceil$ arbitrarily.
  Fix $V' = V(G) \setminus U$. We will use the gross lower bound $|V'| \ge (1-2\eps)n$.

  \begin{claim}
    There are subsets~$A \subseteq U$ and $B \subseteq V'$ such that all pairs in~$AB$ are unresolved,
    and with sizes $|A| \ge \frac{n}{2(k+1)(k+2)}$ and $|B| \ge \eps n$.
  \end{claim}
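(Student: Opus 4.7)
The plan is to apply the bipartite lemma \cref{lem:bipartite-resolved} to the graph of resolved pairs between $U$ and $V'$. Let $E'$ be the set of pairs $uv \in R_i$ with $u \in U$ and $v \in V'$, and partition $V'$ using the trace of $\mathcal{P}_{i-1}$, namely $\mathcal{P} = \{Q \cap V' : Q \in \mathcal{P}_{i-1}, Q \not\subseteq U\}$. Then ``anti-complete in $(U,V',E')$'' means precisely ``no $R_i$-pair between $A$ and $B$'', which is the unresolved condition we want.

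First I would verify the size bound on parts of $\mathcal{P}$: each part is contained in some $Q \in \mathcal{P}_{i-1}$, hence has size at most $\eps n = \frac{n}{2(k+2)}$. Since $|V'| = n - \lceil \eps n \rceil \ge (1-2\eps)n = \frac{k+1}{k+2}n$ (using $\eps n \ge 1$), we get
\[ \eps n = \frac{n}{2(k+2)} \le \frac{|V'|}{2(k+1)}, \]
so parts of $\mathcal{P}$ have size at most $\frac{|V'|}{2(k+1)}$.

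Next I would use the radius-1 width bound to control the degrees in $E'$. For any $u \in U$, the closed neighbourhood $\{u\} \cup N_{R_i}(u)$ meets at most $k$ parts of $\mathcal{P}_{i-1}$, so $N_{R_i}(u) \cap V'$ meets at most $k$ parts of the restricted partition $\mathcal{P}$. In particular, $u$ is adjacent in $E'$ to fewer than $k+1$ parts of $\mathcal{P}$.

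Finally I would invoke \cref{lem:bipartite-resolved} on $(U, V', E')$ with parameter $k+1$, $m = |U| \ge \eps n$, $n' = |V'|$, and partition $\mathcal{P}$. This yields anti-complete (in $E'$) sets $A \subseteq U$ and $B \subseteq V'$ with
\[ |A| \ge \frac{|U|}{k+1} \ge \frac{\eps n}{k+1} = \frac{n}{2(k+1)(k+2)}, \qquad |B| \ge \frac{|V'|}{2(k+1)} \ge \frac{n}{2(k+2)} = \eps n, \]
matching the claimed bounds exactly. There is no real obstacle here beyond arithmetic bookkeeping: the only place where one might lose a constant is in the comparison $\eps n \le \frac{|V'|}{2(k+1)}$, and the definition $\eps = \frac{1}{2k+4}$ was precisely engineered so that this inequality becomes an equality after using $|V'| \ge \frac{k+1}{k+2}n$.
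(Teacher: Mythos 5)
Your proof is correct and follows essentially the same route as the paper: apply \cref{lem:bipartite-resolved} with parameter $k+1$ to the bipartite graph of resolved pairs $(U,V',R_i)$ with $\Pc_{i-1}$ restricted to $V'$, using the radius-1 width bound for the degree condition and the identity $\eps n = \frac{(1-2\eps)n}{2(k+1)} \le \frac{|V'|}{2(k+1)}$ for the part-size condition. The arithmetic matches the paper's exactly.
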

  \begin{proof}
    Consider the bipartite graph $(U,V',R_i)$, and the partition~$\Pc_{i-1}$ restricted to~$V'$.
    By definition of radius-1 merge-width, each vertex of~$U$ is adjacent (in the sense of~$R_i$) to fewer than~$k+1$ parts.
    Notice that $\eps = \frac{1-2\eps}{2k+2}$, and thus parts of~$\Pc_{i-1}$ have size at most
    \begin{equation}
      \eps n = \frac{(1-2\eps) n}{2k+2} \le \frac{|V'|}{2(k+1)}.
    \end{equation}
    Thus \cref{lem:bipartite-resolved} yields the desired sets~$A,B$ with sizes
    \[
      |A| \ge \frac{\eps n}{k+1} = \frac{n}{2(k+1)(k+2)}
      \qquad \text{and} \qquad
      |B| \ge \frac{(1-2\eps)n}{2(k+1)} = \eps n. \cqed\qedhere
    \]
  \end{proof}

  Note that~$A$ is contained within the part~$P \in \Pc_i$.
  By contrast, $B$ might be spread across arbitrarily many parts of~$\Pc_i$.
  Given~$b \in B$, consider the part~$P' \in \Pc_i$ containing~$b$ (which might be~$P$ itself).
  Since the pairs~$ab$ for~$a \in A$ are all unresolved in~$R_i$ and all between~$P,P' \in \Pc_i$, they are either all edges, or all non-edges.
  That is, any given~$b \in B$ is either fully adjacent or fully non-adjacent to~$A$.
  By pigeonhole principle, we find~$B' \subset B$ of size at least~$|B|/2$ such that~$A,B'$ are complete or anti-complete.
  Since~$|B|$ is much larger than~$2|A|$, the size of~$A$ is the limiting factor, and we obtain the strong \EH bound
  \[ |A|,|B'| \ge \frac{n}{2(k+1)(k+2)}. \qedhere \]
\end{proof}

\section[χ-boundedness]{$\chi$-boundedness}\label{sec:chi}
\mwchibounded*

Say that a merge sequence $(\Pc_1,R_1),\dots,(\Pc_k,R_K)$ is \emph{structurally $\omega$-bounded} if for any part $P \in \Pc_i$ which does not induce an independent set in~$G$, all edges incident to a vertex of~$P$ are resolved pairs in~$R_i$.
In particular, there can never be an unresolved edge between two vertices of the same part.
If such a sequence exists with radius-1 width~$k$, then~$G$ has no $(k+1)$-clique.
Indeed, if~$X$ induces a clique, consider the first step~$\Pc_i$ in which two vertices~$u,v \in X$ are in the same part.
Then all edges incident to~$u$ are resolved in~$R_i$, i.e.\ all of~$X$ is within distance~1 of~$u$ in~$R_i$.
Since the vertices of~$X$ are all in distinct parts of~$\Pc_{i-1}$, this implies that the radius-1 width is at least~$|X|$.
We first prove our result under this assumption.

\begin{lemma}\label{lem:struct-bounded-colouring}
    Graphs with a structurally $\omega$-bounded merge sequence of radius-2 width~$k$ are $k$-colourable.
\end{lemma}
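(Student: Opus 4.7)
My plan is to prove the lemma via a greedy colouring argument where vertices are grouped according to a natural substructure of the merge sequence. For each vertex $v \in V(G)$, define its \emph{level} $i_v$ as the smallest index such that $v$'s part in $\mathcal{P}_{i_v}$ is non-independent in~$G$ (if $G$ has no edges, take $\chi(G)=1$ and we are done). By structural $\omega$-boundedness, the preceding part $Q(v) \in \mathcal{P}_{i_v - 1}$ is then an independent set in $G$. A short check shows that the family $\{Q(v) : v \in V(G)\}$ forms a partition of $V(G)$ into independent sets, and that every $u \in Q(v)$ satisfies $i_u = i_v$ and $Q(u) = Q(v)$. It therefore suffices to show that the quotient graph $H$ on these $Q$-classes---with an edge $Q \sim Q'$ iff there is a $G$-edge between them---satisfies $\chi(H) \leq k$, as any proper $k$-colouring of $H$ pulls back to a proper $k$-colouring of $G$.

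To show $\chi(H) \leq k$, I would prove that $H$ is $(k-1)$-degenerate by exhibiting, in any induced subgraph of $H$ (which inherits a structurally $\omega$-bounded merge sequence of radius-2 width $\le k$ after deletion), a $Q$-class of $H$-degree at most $k-1$. The natural candidate is a $Q$-class $Q(v)$ of minimum level $i_v$. Let $P \in \mathcal{P}_{i_v}$ denote $v$'s (non-independent) part. Structural $\omega$-boundedness forces every $G$-edge incident to $P$ to lie in $R_{i_v}$; hence for any witness edge $v_0 u_0 \in E(G)$ with $v_0 \in Q(v)$ and $u_0$ in a distinct $Q$-class $Q(u)$, we have $v_0 u_0 \in R_{i_v}$, and $u_0$'s part in $\mathcal{P}_{i_v - 1}$ (a subset of $Q(u)$) lies at $R_{i_v}$-distance 1 from $v_0$. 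The radius-2 width bound applied to the fixed vertex $v$ then limits to at most $k$ the number of $\mathcal{P}_{i_v - 1}$-parts within $R_{i_v}$-distance 2 of $v$, i.e.\ at most $k-1$ parts other than $Q(v)$ itself.

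The heart of the proof is then to route each witness back to $v$: I would show that every $H$-neighbour $Q(u)$ of $Q(v)$ meets the $R_{i_v}$-distance-2 ball of $v$. After passing to a minimal merge sequence (which preserves structural $\omega$-boundedness and does not increase the radius-2 width), this routing uses \cref{lem:minimal-merge-sequence} applied to the parts $Q(v)$ and $Q'(u)$ (the $\mathcal{P}_{i_v - 1}$-part containing $u_0$) at step $i_v - 1$. Either $v u_0 \in R_{i_v}$ holds directly (giving $R$-distance 1 from $v$), or minimality of $R$ forces $v v_0 \in R_{i_v}$, so that $v \to v_0 \to u_0$ is a path of length~2 in $R_{i_v}$ and $Q'(u)$ sits inside the radius-2 ball of $v$. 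Since distinct $Q$-classes $Q(u)$ contribute disjoint $\mathcal{P}_{i_v - 1}$-parts $Q'(u)$, the bound of $k-1$ parts transfers to at most $k-1$ $H$-neighbours of $Q(v)$.

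The main obstacle is precisely the routing step, and it is exactly where radius 2 (rather than radius 1) is essential: with radius 1 one only sees the $R_{i_v}$-neighbourhood of a single fixed $v \in Q(v)$, which is insufficient because different members of $Q(v)$ may witness different $H$-neighbours. The radius-2 bound, combined with the consistency provided by \cref{lem:minimal-merge-sequence}, is what allows a single vertex's distance-2 ball to account for every witness. Once the $(k-1)$-degeneracy of $H$ is established, a standard greedy colouring yields a proper $k$-colouring of $H$ and thus of $G$.
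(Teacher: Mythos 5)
Your overall skeleton---read off from the merge sequence a partition of $V(G)$ into independent parts, then bound the back-degree of the quotient via the radius-2 width---matches the paper's, but the routing step at the heart of your argument has a genuine gap, and the claim it is meant to establish is in fact false for your choice of partition. The problem is the case where the witness edge $v_0u_0$ was resolved before step $i_v-1$: then $vu_0$ and $v_0u_0$ are not both unresolved in $R_{i_v-1}$, so \cref{lem:minimal-merge-sequence} does not apply to this pair of pairs; and the fallback ``minimality forces $vv_0\in R_{i_v}$'' has no justification, since $vv_0$ is a non-edge inside the independent part $Q(v)$ and neither minimality nor structural $\omega$-boundedness ever forces a non-edge within a single part to become resolved (\cref{lem:minimal-merge-sequence} only concerns pairs between two distinct parts). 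Concretely: take $G$ with vertices $v_1,\dots,v_n$, $u_1,\dots,u_n$, $w$ and edges $v_ju_j$ (for all $j$) and $wu_1$; resolve exactly $E(G)$ at step~2, merge the $v_j$ into $Q_V$ at step~3, merge $Q_V$ with $u_1$ at step~4 (the first non-independent part), and merge everything at step~5. This is a structurally $\omega$-bounded sequence of radius-2 width~$3$, the class $Q(v_j)=Q_V$ has minimum level, yet it has $n$ neighbouring classes $\{u_1\},\dots,\{u_n\}$ of greater or equal level, while the $R_4$-distance-2 ball of any fixed $v_j$ meets at most~$3$ parts of $\Pc_3$. So no single $v\in Q(v)$ can serve as the centre, and ``the minimum-level class has degree at most $k-1$'' is simply false here.

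The paper avoids this by partitioning into maximally \emph{unresolved} parts rather than maximal \emph{independent} parts. The defining feature of such a part $P\in\Pc_i$ is an \emph{unresolved} edge $xy$ with $y\in P$; the all-edges-or-all-non-edges condition between $x$'s part and $P$, combined with maximality, then forces this single external vertex $x$ to be $R_{i+1}$-adjacent to \emph{every} vertex of $P$, and that hub is what lets one distance-2 ball capture all witnesses regardless of which $v_0\in P$ they attach to. (In the example above the maximally unresolved parts are the singletons, and the argument reduces to a maximum-degree bound.) A secondary issue: passing to a minimal merge sequence need not preserve structural $\omega$-boundedness, since minimisation may strip out resolved edges that the $\omega$-boundedness condition requires; the paper's proof of this lemma uses no minimality assumption at all.
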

\begin{proof}
    Consider a merge sequence $(\Pc_1,R_1),\dots,(\Pc_m,R_m)$ for~$G$ subject to all the conditions in the statement.

    We say that a part~$P \in \Pc_i$ is \emph{resolved at step~$i$} if all edges of~$G$ incident to~$P$ are resolved in~$R_i$.
    Otherwise, the part~$P$ is \emph{unresolved at step~$i$}, and the assumption that the merge sequence is structurally $\omega$-bounded gives that~$P$ induces an independent set.
    Say that~$P \in \Pc_i$ is \emph{maximally unresolved at step~$i$} if it is unresolved, and for any~$j > i$, the part of~$\Pc_j$ containing~$P$ is resolved.
    In particular, if~$P$ is maximally unresolved at step~$i$, then all edges incident to~$P$ are present in~$R_{i+1}$.
    The collection of maximally unresolved parts is a partition~$\Pc$ of~$V(G)$.

    \begin{claim}\label{claim:part-radius-1}
        For any maximally unresolved~$P \in \Pc_i$, there is a vertex~$x \in V(G)$ such that $xy \in R_{i+1}$ is a resolved pair for all~$y \in P$.
    \end{claim}
    \begin{proof}
        By assumption, $P$ is unresolved at step~$i$, i.e.\ there is an edge $xy \in E(G)$ with $y \in P$ which is not present in~$R_i$.
        Then the definition of merge sequence requires that for any~$y' \in P$, either~$xy'$ is present in~$R_i$, or it is an edge in~$E(G)$, which must thus be present in~$R_{i+1}$ by maximality of~$i$.
        Either way~$xy' \in R_{i+1}$ for all~$y' \in P$. \cqed
    \end{proof}

    Define the index of $P \in \Pc$ as the maximal step~$i$ at which~$P$ is unresolved.
    We order~$\Pc$ by indices, that is we define the quasi-ordering~$\qle$ by $P \qle Q$ if~$i,j$ are the indices of~$P,Q$ respectively, and $i \le j$.
    We claim that each part~$P \in \Pc$ is adjacent to fewer than~$k$ other parts~$Q \in \Pc$ with~$Q \qge P$.

    Consider the vertex~$x$ given by \cref{claim:part-radius-1} for~$P$.
    Suppose that~$yz$ is an edge in~$G$ with $y \in P$ and $z \in Q$.
    Then we have~$xy \in R_{i+1}$ by choice of~$x$, and~$yz \in R_{i+1}$ since~$P$ is maximally unresolved, hence~$xyz$ is a path of length~2 from~$x$ to~$Q$ in~$(V,R_{i+1})$.
    Now applying the definition of width of the merge sequence, consider the at most~$k$ parts~$Q'_1,\dots,Q'_k$ of~$\Pc_i$ within distance~2 of~$x$ in~$(V,R_{i+1})$.
    Notice that~$P$ itself is one of these parts due to the pair~$xy \in R_{i+1}$, say~$Q'_k = P$.
    Since~$Q \qge P$, $Q$ comes from a partition~$\Pc_j$ for some~$j \ge i$, implying that~$Q$ is a union of parts of~$\Pc_i$.
    It follows that~$Q$ contains one of~$Q'_1,\dots,Q'_{k-1}$ (but not~$Q'_k = P$ since~$P,Q$ are distinct parts of~$\Pc$).
    Thus there cannot be more than~$k-1$ such parts~$Q$ in~$\Pc$.

    This proves that the quotient graph~$G/\Pc$ is $(k-1)$-degenerate.
    Since each part of~$\Pc$ induces an independent set in~$G$, this implies that~$G$ is $k$-colourable.
\end{proof}

Our goal is now to reduce the general case to that of structurally $\omega$-bounded merge sequences.

\begin{lemma}\label{lem:edge-partition}
    Let~$G = (V,E)$ be a graph with $\mw_2(G) \le k$ and $\omega(G) = t$.
    Then~$G$ can be edge-partitioned as $E = E_R \cup E_U \cup E_I$ such that:
    \begin{enumerate}
        \item $G_I \eqdef (V,E_I)$ is a disjoint union of induced subgraphs of~$G$, and satisfies $\omega(G_I) < t$,
        \item $G_R \eqdef (V,E_R)$ has a structurally $\omega$-bounded merge sequence of radius-2 width~$k$,
        \item and $G_U \eqdef (V,E_U)$ is $(kt+1)$-colourable.
    \end{enumerate}
\end{lemma}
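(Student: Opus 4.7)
The plan is to work with a minimal radius-$2$ merge sequence $(\Pc_1,R_1),\dots,(\Pc_m,R_m)$ of width $k$ for $G$, obtained via \cref{lem:minimal-merge-sequence}. Writing $P_i(v) \in \Pc_i$ for the part containing $v$, define $\tau(v)$ as the largest index $i$ such that $G[P_i(v)]$ is $K_t$-free, and set $V_v := P_{\tau(v)}(v)$. A short laminarity argument---every $u \in V_v$ satisfies $\tau(u) = \tau(v)$, because $P_{\tau(v)+1}(u) = P_{\tau(v)+1}(v)$ already contains a $K_t$ by the maximality of $\tau(v)$---shows that $\{V_v : v \in V\}$ partitions $V$ into $K_t$-free parts of the merge sequence.

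Take $E_I := \bigsqcup_{V \in \{V_v\}} E(G[V])$, making $G_I$ a disjoint union of $K_t$-free induced subgraphs of $G$ and yielding condition~(1) immediately. For a cross-edge $uv \in E$ (with $V_u \ne V_v$), put $uv$ in $E_R$ if $uv \in R_{\min(\tau(u),\tau(v))+1}$ and in $E_U$ otherwise. This criterion enforces condition~(2): any part $P \in \Pc_i$ inducing a $G_R$-edge must contain two distinct $V$'s, each of which is a strict sub-part of $P$ because $V$'s are $K_t$-free while $P$ is not, forcing $\tau(V) < i$ for both; hence every $G_R$-incident edge $xy \in E_R$ of $P$ satisfies $\min(\tau(x),\tau(y)) + 1 \le i$, placing $xy$ into $R_i$. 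For condition~(3) I would order the vertices by $\tau(v)$ decreasingly and bound the degeneracy of $G_U$ by $kt$: the $E_U$-neighbours of $v$ with $\tau(u) \le \tau(v)$ reach vertices in parts of $\Pc_{\tau(v)}$ accessible from $v$ by $R_{\tau(v)+1}$-paths of length at most~$2$---bounded by $k$ via the radius-$2$ width hypothesis---with each accessible part contributing at most $t$ clique-relevant neighbours since $\omega(G) = t$, giving a $(kt+1)$-colouring by greedy colouring.

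The main obstacle is establishing that $(\Pc_i, R_i)$ remains a valid merge sequence for $G_R$: for each pair $A, B \in \Pc_i$ the $G$-edges in $A \times B \setminus R_i$ must uniformly stay edges of $G_R$ or uniformly vanish. \cref{lem:minimal-merge-sequence} guarantees that all such pairs share a common resolution step, but the quantity $\min(\tau(u),\tau(v))$ appearing in the $E_R$-criterion may a priori differ across pairs. A careful case analysis is required, distinguishing whether $A$ and $B$ are individually $K_t$-free at step $i$ or already contain a $K_t$: when both are $K_t$-free, $\tau$ is constant on $A$ and on $B$ and uniformity is immediate; when one of them contains a $K_t$, one needs to show that the unresolved cross pairs all connect vertices lying within a single $V$ on each side, which is where the radius-$2$ (rather than radius-$1$) hypothesis plays an essential role. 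Reconciling these cases cleanly, possibly by transferring a controlled number of boundary edges to $E_U$ without inflating the $kt$-degeneracy bound, is the technical heart of the lemma.
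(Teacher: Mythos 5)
Your decomposition is exactly the paper's: the parts $V_v$ are its ``maximally $K_t$-free'' parts, $E_I$, and the criterion $uv\in R_{\min(\tau(u),\tau(v))+1}$ for $E_R$ all match, and your derivation of structural $\omega$-boundedness (every $x$ in a non-independent part $P\in\Pc_i$ has $\tau(x)<i$, so incident $E_R$-edges lie in $R_i$) is sound. But two essential steps are missing. First, the validity of $(\Pc_1,R_1),\dots,(\Pc_m,R_m)$ as a merge sequence for $G_R$, which you explicitly leave open, does not require the case analysis or the radius-$2$ hypothesis you anticipate. The resolution is short: if some unresolved pair $uv\in AB\setminus R_i$ lies in $E_R$, then $uv\in R_{j+1}$ for $j=\min(\tau(u),\tau(v))$, and $uv\notin R_i$ forces $j\ge i$; hence $A$ and $B$ are each contained in a single part of your partition $\{V_v\}$, so $\tau$ is automatically constant on $A$ and on $B$, every other unresolved pair $u'v'$ has the same $j$, and \cref{lem:minimal-merge-sequence} puts $u'v'$ into $R_{j+1}$ and hence into $E_R$. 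Your worry that $\min(\tau(u),\tau(v))$ varies across pairs only arises when no unresolved pair is in $E_R$, in which case there is nothing to prove.

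Second, your argument for the $(kt+1)$-colourability of $G_U$ has a real gap: you assert that $E_U$-neighbours of $v$ are reached by short \emph{resolved} paths, but $E_U$-edges are precisely those that are \emph{unresolved} at the relevant step, so no resolved path is available a priori. The missing idea is to use the $t$-clique $X$ that appears in $P_{i+1}(u)$ for $u$ in a part of index $i$: for an unresolved $E_U$-edge $uv$, since $X\cup\{v\}$ cannot be a $(t+1)$-clique, some $x\in X$ has $xv$ a non-edge (or $x=v$), and because $uv$ is an unresolved \emph{edge} between the same pair of parts of $\Pc_{i+1}$, the non-edge $xv$ must be resolved, i.e.\ $xv\in R_{i+1}$. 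This places $v$ within distance $1$ of one of the $t$ vertices of $X$ in $(V,R_{i+1})$, bounding by $kt$ the number of higher-index \emph{parts} (not vertices) adjacent to a given part in $E_U$; one then colours the quotient $G_U/\{V_v\}$, which is $kt$-degenerate, each part being independent in $G_U$. Note this step only needs radius-$1$ width, and your proposed per-vertex degree bound of $kt$ is false as stated, since a single accessible part may contain arbitrarily many $E_U$-neighbours of $v$.
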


Let us first quickly show that \cref{lem:edge-partition} implies the main result of this section.
\begin{proof}[Proof of \cref{thm:chi-bounded}]
    Consider~$G$ with~$\mw_2(G) \le k$ and~$\omega(G) = t$. We prove
    \begin{equation}
      \chi(G) \le (t+1)!k^{2t-2}
    \end{equation}
    by induction on~$t$, the base case~$t = 1$ (i.e.~$G$ being edgeless) being trivial.

    By \cref{lem:edge-partition}, $G$ has an edge-partition into~$G_I,G_R,G_U$, satisfying the following:
    \begin{enumerate}
        \item $G_I$ is a disjoint union of induced subgraphs of~$G$ and satisfies $\omega(G_I) < t$.
        Merge-width cannot increase when taking induced subgraphs or making disjoint unions, hence~$\mw_2(G_I) \le k$.
        By induction hypothesis, this gives $\chi(G_I) \le t!k^{2t-4}$.
        \item $G_R$ has a structurally $\omega$-bounded merge sequence of radius-2 width~$k$.
        Thus \cref{lem:struct-bounded-colouring} gives $\chi(G_R) \le k$.
        \item $G_U$ is $(kt+1)$-colourable.
    \end{enumerate}
    The chromatic number of an edge union of graphs is bounded by the product of their chromatic numbers, thus we have as desired
    \begin{align*}
        \chi(G) & \le \chi(G_I) \cdot \chi(G_R) \cdot \chi(G_U) \\
        & \le t!k^{2t-4} \cdot k \cdot (kt+1) \\
        & \le (t+1)!k^{2t-2}. \qedhere
    \end{align*}
\end{proof}

\begin{proof}[Proof of \cref{lem:edge-partition}]
    Consider a minimal merge sequence $(\Pc_1,R_1),\dots,(\Pc_m,R_m)$ with radius-2 width~$k$.

    Say that~$P \in \Pc_i$ is maximally $K_t$-free if~$P$ does not contain a clique~$K_t$, but the part of~$\Pc_{i+1}$ containing~$P$ does.
    The family of maximally $K_t$-free parts is a partition~$\Pc$ of~$V(G)$.
    For~$P \in \Pc$, we call \emph{index} of~$P$ the step~$i$ such that~$P$ is maximally $K_t$-free in~$\Pc_i$.
    We order~$\Pc$ by indices, i.e.\ $P \qle Q$ if and only if the index of~$P$ is less than or equal to that of~$Q$.

    We define a set~$R$ of `resolved pairs' for~$\Pc$.
    Note that~$(\Pc,R)$ will not satisfy the requirement that the unresolved pairs between two parts be all edges or all non-edges.
    Consider a pair~$xy$ belonging to parts~$x \in P$ and~$y \in Q$, and let~$i$ be the minimum of the indices of~$P$ and~$Q$.
    Then we choose to have $xy \in R$ if and only if $xy \in R_{i+1}$.
    The edge partition of~$G$ is then defined as follows:
    \begin{enumerate}
        \item $E_I$ consists of all edges between two vertices of the same part of~$\Pc$,
        \item $E_R = (E(G) \cap R) \setminus E_I$ contains resolved edges between distinct parts of~$\Pc$, and
        \item $E_U = E(G) \setminus (R \cup E_I)$ contains unresolved edges between distinct parts of~$\Pc$.
    \end{enumerate}
    By construction, each part~$P \in \Pc$ induces a $K_t$-free subgraph of~$G$, thus the requirement on~$E_I$ is satisfied.

    \begin{claim}
        For each~$P \in \Pc$, there are at most~$kt$ other parts~$Q \qge P$ adjacent to~$P$ in~$E_U$.
    \end{claim}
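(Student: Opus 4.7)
The plan is to exploit the copy of $K_t$ guaranteed inside $P^+$, the part of $\Pc_{i+1}$ containing $P$, where $i$ is the index of $P$ (so $i \le m-1$ by definition of the index). I fix such a clique on vertices $v_1, \dots, v_t \in P^+$. For each $Q \in \Pc$ with $Q \qge P$ that is linked to $P$ by an edge of $E_U$, I pick a witness edge $xy$ with $x \in P$ and $y \in Q$, let $P_y \in \Pc_i$ denote the $\Pc_i$-part of $y$, and aim to show that $P_y$ contains a vertex within distance $1$ from some $v_j$ in the graph $(V(G), R_{i+1})$. The conclusion will follow from the radius-$2$ width hypothesis together with an injectivity argument on the map $Q \mapsto P_y$.

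The technical heart of the proof is the dichotomy: either $y$ equals some $v_j$ (so $P_y$ already contains $v_j$, giving distance $0$), or some pair $v_j y$ is resolved in $R_{i+1}$ (placing $y$ at distance $1$ from $v_j$). By the definition of $R$ together with $Q \qge P$, the edge $xy$ is unresolved in $R_{i+1}$; it sits between the $\Pc_{i+1}$-parts $P^+$ and $Q^+$ of $x$ and $y$, which may or may not coincide. In either situation, the merge-sequence axiom applied to the pair $P^+, Q^+$ at step $i+1$ forces every unresolved pair in $P^+ \times Q^+ \setminus R_{i+1}$ to be an edge of $G$, since $xy$ is such an unresolved edge. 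Now assume $y \notin \{v_1, \dots, v_t\}$ and, for contradiction, that no $v_j y$ lies in $R_{i+1}$. Then each pair $v_j y$ is an unresolved pair in $P^+ \times Q^+$, hence an edge of $G$, and together with the clique on $v_1, \dots, v_t$ this produces a $K_{t+1}$, contradicting $\omega(G) \le t$.

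With the dichotomy in hand, I invoke the radius-$2$ width hypothesis at step $i+1$ with base vertex $v_j$: at most $k$ parts of $\Pc_i$ have a vertex within distance $2$ (in particular, within distance $1$) of $v_j$ in $R_{i+1}$. Ranging over $j = 1, \dots, t$ gives at most $kt$ candidate parts of $\Pc_i$. Since elements of $\Pc$ are pairwise disjoint and $Q \qge P$ forces $P_y \subseteq Q$, the assignment $Q \mapsto P_y$ is injective, bounding the number of eligible $Q$ by $kt$.

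The main subtlety to watch is the uniform treatment of the cases $P^+ = Q^+$ and $P^+ \neq Q^+$ when invoking the merge-sequence axiom; in both, the unresolved edge $xy$ forces the entire $P^+ \times Q^+$ block to be edge-typed, which is precisely what powers the $K_{t+1}$ contradiction. Beyond that, the argument is a clean combination of the $K_t$ supplied by the maximality of $P$'s index, the clique-number hypothesis $\omega(G) \le t$, and a counting step via the radius-$2$ width.
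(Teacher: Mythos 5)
Your proof is correct and follows essentially the same route as the paper's: fix the $K_t$ in the $\Pc_{i+1}$-part containing $P$, use the merge-sequence axiom on the unresolved witness edge to force each neighbour $y$ to either coincide with or be $R_{i+1}$-resolved against some clique vertex (else a $K_{t+1}$ appears), and then count at most $k$ parts of $\Pc_i$ per clique vertex via the width bound. The only cosmetic difference is that you phrase the dichotomy as a contradiction producing $K_{t+1}$ directly, whereas the paper argues that some $xv$ must be a non-edge and hence resolved; these are logically equivalent.
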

    \begin{proof}
        Call~$P'$ the part of~$\Pc_{i+1}$ containing~$P$. By assumption, $P'$ contains a clique~$X$ on~$t$ vertices.
        Assume that there is an edge $uv \in E_U$ with $u \in P$, $v \in Q$, and $P \qle Q$.
        By definition of~$E_U$, $uv$ is an unresolved edge in~$R_{i+1}$.

        There must be some~$x \in X$ such that either~$xv$ is a non-edge or~$x = v$, as otherwise $X \cup \{v\}$ would be a $(t+1)$-clique in~$G$.
        If~$xv$ is a non-edge, then it must be resolved, as we cannot have both the unresolved edge~$uv$ and the unresolved non-edge~$xv$ between~$v$ and~$P'$.
        Thus either~$xv \in R_{i+1}$ or~$x = v$.
        Either way, $v$ belongs to one of the at most~$k$ parts of~$\Pc_i$ which are within distance~1 of~$x$ in $(V,R_{i+1})$.
        Since~$Q \in \Pc_j$ for some~$j \ge i$, it follows that~$Q$ fully contains one of these at most~$k$ parts.
        Multiplying by the~$t$ choices of~$x \in X$, this leaves at most~$kt$ parts~$Q \qge P$ adjacent to~$P$ in~$E_U$. \cqed
    \end{proof}
    Thus in $G_U = (V,E_U)$, each part of~$\Pc$ induces an independent set, and the quotient graph~$G_U/\Pc$ is $kt$-degenerate.
    It follows that~$G_U$ is $(kt+1)$-colourable as desired.

    Finally, we consider the graph~$G_R = (V,E_R)$.
    \begin{claim}\label{clm:merge-sequence-valid}
        The merge sequence $(\Pc_1,R_1),\dots,(\Pc_m,R_m)$ for~$G$ is also a valid merge sequence for~$G_R$.
    \end{claim}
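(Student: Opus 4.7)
The plan is to verify the three defining properties of a merge sequence for~$G_R$. The first two---the chain of partitions $\Pc_1, \dots, \Pc_m$ with the required refinement structure, and the monotone chain $R_1 \subseteq \dots \subseteq R_m$---carry over verbatim from~$G$, so only the third condition remains: for every~$i$ and every (possibly equal) parts $A, B \in \Pc_i$, the pairs in $AB \setminus R_i$ must all be edges, or all non-edges, of~$G_R$.

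I would fix such $A, B$ and two unresolved pairs $xy, x'y' \in AB \setminus R_i$ with $x, x' \in A$ and $y, y' \in B$. The merge sequence property of~$G$ ensures they agree on edge/non-edge status in~$G$. If both are non-edges in~$G$, then since $E_R \subseteq E(G)$ they are non-edges in~$G_R$, and there is nothing to do. Otherwise both are edges of~$G$, and I must show they have the same membership in $E_R = (R \cap E(G)) \setminus E_I$.

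I would then split depending on whether~$A$ or~$B$ contains a~$K_t$. Suppose first that~$A$ does (the case of~$B$ is symmetric). Every vertex $x \in A$ must then lie in a $\Pc$-part~$P_x$ of index strictly less than~$i$: otherwise $P_x \in \Pc_j$ for some $j \geq i$ would force $P_x \supseteq A$ and hence $P_x$ to contain a~$K_t$, contradicting its maximal $K_t$-freeness. So for every pair $xy$ with $x \in A$, $y \in B$, the defining index $i^\ast = \min(\mathrm{index}(P_x), \mathrm{index}(Q_y))$ satisfies $i^\ast < i$, giving $xy \in R$ iff $xy \in R_{i^\ast+1} \subseteq R_i$. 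Since $xy$ is unresolved in~$R_i$, this forces $xy \notin R$, hence $xy \notin E_R$; the same argument yields $x'y' \notin E_R$, so both pairs are non-edges of~$G_R$.

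In the remaining case, neither~$A$ nor~$B$ contains a~$K_t$. Then any $x \in A$ lies in a $\Pc$-part of index $\geq i$ (else the symmetric argument would force $A$ to contain a $K_t$), which must therefore contain all of~$A$; as $\Pc$ is a partition, this part is uniquely determined by~$A$, and I denote it~$P_A$, defining~$P_B \supseteq B$ analogously. Consequently the $E_I$-status of~$xy$ reduces to the single condition "$P_A = P_B$", independent of the chosen pair, and the defining index $i^\ast = \min(\mathrm{index}(P_A), \mathrm{index}(P_B))$ is likewise pair-independent. Applying \cref{lem:minimal-merge-sequence} to $A, B \in \Pc_i$ then shows that $xy$ and $x'y'$ have the same status in every~$R_j$, in particular in~$R_{i^\ast+1}$, so $xy \in R$ iff $x'y' \in R$ and thus $xy \in E_R$ iff $x'y' \in E_R$. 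The main delicate point is the subcase $A = B$, where $P_A = P_B$ automatically and every within-$A$ unresolved edge is swallowed by~$E_I$; beyond this, only \cref{lem:minimal-merge-sequence} and the definition of the index of a maximally $K_t$-free part are required.
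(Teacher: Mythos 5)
Your proof is correct and uses the same key ingredients as the paper's: the fact that the defining index $i^*$ of $R$ is determined by the $\Pc$-parts containing the endpoints, together with \cref{lem:minimal-merge-sequence} to transfer resolution status between the two unresolved pairs. The only difference is organisational---you run a forward case analysis on whether $A$ or $B$ contains a $K_t$ (which pins down whether the relevant indices are $<i$ or $\ge i$), whereas the paper assumes one unresolved pair lies in $E_R$ and deduces $i^*\ge i$ from that; both routes cover the same cases and are equally valid.
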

    \begin{proof}
        Consider parts~$A,B \in \Pc_i$, and two unresolved pairs $uv, u'v' \in AB \setminus R_i$.
        Assuming that~$uv$ is an edge in~$E_R$, we need to show that~$u'v'$ is also an edge.
        Recall that~$E_R = (E(G) \cap R) \setminus E_I$.

        Call~$A',B'$ the parts of~$\Pc$ containing~$u,v$ respectively, and~$j$ the minimum of their indices.
        We have $uv \in E_R \subseteq R$, hence $uv \in R_{j+1}$ by definition of~$R$.
        Thus $uv \not\in R_i$ but $uv \in R_{j+1}$, implying~$j \ge i$.
        It follows that $A \subseteq A'$ and $B \subseteq B'$.
        In particular, $u,u' \in A'$, $v,v' \in B'$, and~$A' \neq B'$ as otherwise~$uv$ would be in~$E_I$ and not~$E_R$.

        Next, since~$uv$ is an edge in~$E_R$ and a fortiori in~$E(G)$, the conditions on the merge sequence $(\Pc_1,R_1),\dots,(\Pc_m,R_m)$ for~$G$ give that~$u'v'$ is also an edge in~$E(G)$.

        Finally, since we have $u,u' \in A$, $v,v' \in B$, and the pairs~$uv,u'v'$ are both unresolved in~$R_i$, and since we assumed the merge sequence $(\Pc_1,R_1),\dots,(\Pc_m,R_m)$ to be minimal, \cref{lem:minimal-merge-sequence} gives that~$uv,u'v'$ are either both resolved or both unresolved in~$R_{j+1}$.
        We know that~$uv \in R_{j+1}$, thus $u'v' \in R_{j+1}$ too, which implies $u'v' \in R$.
        Thus the pair~$u'v'$ is an edge of~$E(G)$, is resolved in~$R$, and does not belong to~$E_I$, proving $u'v' \in E_R$ as desired. \cqed
    \end{proof}

    Additionally, the width of the merge sequence $(\Pc_1,R_1),\dots,(\Pc_m,R_m)$ is the same whether it is seen as a merge sequence for~$G$ or for~$G_R$:
    indeed the width is defined only in terms of the partitions~$\Pc_i$ and the resolved pairs~$R_i$.
    It finally remains to check that the merge sequence $(\Pc_1,R_1),\dots,(\Pc_m,R_m)$ is structurally $\omega$-bounded for~$G_R$.

    Suppose that~$uv \in E_R \setminus R_i$ is an unresolved edge between parts~$A,B \in \Pc_i$.
    As already argued in the proof of \cref{clm:merge-sequence-valid}, this implies that the parts~$A',B' \in \Pc$ containing~$u$ and~$v$ respectively have indices at least~$i$, and in particular~$A \subseteq A'$, $B \subseteq B'$.
    By choice of~$E_R$, parts of~$\Pc$ induce independent sets in~$G_R$, and a fortiori so do~$A$ and~$B$.

    Thus $(\Pc_1,R_1),\dots,(\Pc_m,R_m)$ is a structurally $\omega$-bounded merge sequence for~$G_R$, and its radius-2 width is at most~$k$, as desired.
\end{proof}

\section{Neighbourhood complexity}\label{sec:nghbd}
We will use the following lemma to ensure a form of density in obstructions to linear neighbourhood complexity.
For a pair of vertices~$x,y$, we write $\triangle(x,y) \eqdef N(x) \triangle N(y)$ for the symmetric difference of their neighbourhoods.
\begin{lemma}\label{lem:density-increase}
    If~$G$ has neighbourhood complexity~$\pi_G(p)$ not bounded by~$\alpha p$ for a constant~$\alpha$,
    then there are disjoint subsets~$X,Y$ of vertices such that
    \begin{enumerate}
        \item vertices in~$Y$ have pairwise distinct neighbourhoods in~$X$, and
        \item any~$x,x' \in X$ have neighbourhoods differing on more than~$\alpha$ vertices of~$Y$, i.e.\ $|Y \cap \triangle(x,x')| > \alpha$.
    \end{enumerate}
\end{lemma}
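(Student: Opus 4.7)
The plan is to start from a witness of the failure of $\pi_G(p) \le \alpha p$ and iteratively shrink it until condition~2 holds, all while preserving condition~1 and the density ratio $|Y| > \alpha|X|$.

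First, the hypothesis gives some~$p$ and a set $X_0 \subseteq V(G)$ of size~$p$ such that more than $\alpha p$ vertices outside~$X_0$ have pairwise distinct traces on~$X_0$. Fix such a set~$Y_0$: then $X_0$ and $Y_0$ are disjoint, $|Y_0| > \alpha|X_0|$, and condition~1 holds for $(X_0, Y_0)$.

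Starting from $(X, Y) = (X_0, Y_0)$, I would apply the following reduction. Whenever there exist $x, x' \in X$ with $|Y \cap \triangle(x, x')| \le \alpha$, remove $x'$ from $X$; then, for each pair $y, y' \in Y$ whose traces on~$X$ differed only at~$x'$, remove one of the two from~$Y$. Such pairs form a matching, since two vertices of $Y$ with distinct traces on~$X$ that agree on $X \setminus \{x'\}$ must differ exactly at~$x'$. This preserves condition~1 by construction, and keeps $X$ and $Y$ disjoint.

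The heart of the argument is showing that $|Y| - \alpha|X|$ never decreases: since $|X|$ shrinks by one, it suffices to check that at most $\alpha$ vertices are removed from~$Y$. Consider a matched pair $(y, y')$ with $N(y) \cap X$ and $N(y') \cap X$ differing exactly at~$x'$. Then $y$ and $y'$ agree on membership in~$N(x)$, while exactly one of them lies in~$N(x')$. A short case analysis, split according to whether both are in~$N(x)$ or both outside~$N(x)$, shows that in either case exactly one of $y, y'$ lies in $Y \cap \triangle(x, x')$. Hence matched pairs inject into $Y \cap \triangle(x, x')$, so the matching has size at most~$\alpha$, as required. This injection is the one step I expect to need a careful argument; the rest is bookkeeping.

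Since $|X|$ strictly decreases each iteration, the process terminates. At termination, condition~2 holds (vacuously if $|X| \le 1$), condition~1 has been preserved, and the invariant $|Y| > \alpha|X| \ge 0$ ensures $Y \ne \emptyset$, giving the desired pair of sets.
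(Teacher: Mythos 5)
Your proof is correct and takes essentially the same route as the paper: the paper chooses $X$ minimal (rather than iterating) and, when some pair $x,x'$ violates condition~2, deletes $x'$ from $X$ and all of $Y \cap \triangle(x,x')$ from $Y$, which costs at most $\alpha$ vertices of $Y$ and restores distinctness of traces. Your matching/injection argument is just a slightly more explicit version of the same counting step.
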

\begin{proof}
    By assumption, there exist disjoint sets~$X,Y$ with sizes $|Y| > \alpha |X|$ such that vertices in~$Y$ have pairwise distinct neighbourhoods in~$X$.
    Choose~$X$ minimal so that such a~$Y$ exists, and assume for a contradiction that~$x,x' \in X$ satisfy $|Y \cap \triangle(x,x')| \le \alpha$.
    Consider then $X' \eqdef X \setminus \{y\}$ and $Y' \eqdef Y \setminus \triangle(x,x')$.
    They still satisfy $|Y'| > \alpha |X'|$, and vertices in~$Y'$ have pairwise distinct neighbourhoods over~$X'$, contradicting the minimality of~$X$.
\end{proof}

\mwneighbourhood*
\begin{proof}
    Assume for a contradiction that the neighbourhood complexity of~$G$ is more than~$k2^{k+2} p$,
    and consider the disjoint subsets~$X,Y$ given by \cref{lem:density-increase}:
    vertices in~$Y$ have pairwise distinct neighbourhoods in~$X$,
    while vertices in~$X$ pairwise have neighbourhoods differing on more than~$k2^{k+2}$ vertices of~$Y$.

    Fix a merge sequence $(\Pc_1,R_1),\dots,(\Pc_m,R_m)$ with radius-2 merge-width equal to~$k$,
    and define~$\Pc_i$ to be the first step in which two vertices of~$X$, say~$x,y$, belong to the same part.
    Define~$A = Y \cap \triangle(x,y)$, which by choice of~$X,Y$ satisfies~$|A| > k2^{k+2}$.
    For any~$a \in A$, $ax$ and~$ay$ are neither both edges nor non-edges. Thus one of them must be resolved in~$R_i$.
    Up to symmetry, assume that~$ax$ is the resolved pair in~$R_i$ at least half of the time,
    and define $B \eqdef \{a \in A : ax \in R_i\}$, so that $|B| > k2^{k+1}$.

    All of~$B$ is within distance~1 of~$x$ in~$R_i$, hence it must intersect at most~$k$ parts of~$\Pc_{i-1}$, and a fortiori of~$\Pc_i$.
    Thus one of them, say~$P \in \Pc_i$, satisfies $|B \cap P| > 2^{k+1}$. Define $C \eqdef B \cap P$.
    \begin{claim}
        There are at least~$k+1$ vertices in~$X$ that are neither fully adjacent nor fully non-adjacent to~$C$.
    \end{claim}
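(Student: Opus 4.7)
The plan is to let $X^{\star} \subseteq X$ be the set of vertices that are neither fully adjacent nor fully non-adjacent to $C$, and to bound $|X^{\star}|$ from below by a pigeonhole argument based on how $C$ is distinguished inside $X$. First, I would observe that each $z \in X \setminus X^{\star}$ contributes the same bit to $N(c) \cap X$ for every $c \in C$: by definition of $X^{\star}$, such a $z$ is either adjacent to all of $C$ or to none of it. Consequently the vertices of $C$ cannot be distinguished by their neighbourhoods inside $X \setminus X^{\star}$, and any two $c, c' \in C$ whose neighbourhoods in $X$ differ must already differ inside $X^{\star}$.

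Next, I would invoke property~(1) of the pair $(X, Y)$ from \cref{lem:density-increase}. Since $C \subseteq B \subseteq A \subseteq Y$, the vertices of $C$ have pairwise distinct neighbourhoods in $X$. Combined with the previous observation, the map $c \mapsto N(c) \cap X^{\star}$ must then be injective on $C$, so $|C| \le 2^{|X^{\star}|}$. Plugging in the earlier bound $|C| > 2^{k+1}$ yields $|X^{\star}| \ge k+2$, which is stronger than the claimed $|X^{\star}| \ge k+1$.

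I do not expect any substantive obstacle here; the argument is essentially the counting half of a Sauer--Shelah style inequality. The real work has already been done in the preceding steps of the proof, where an exponentially large subset $C$ of $Y$ was extracted from inside a single part of $\Pc_i$ using the distance-$1$ bound coming from radius-$2$ merge-width. Once that exponential-versus-linear gap is in hand, the dichotomy/injectivity pigeonhole above is what converts it into the desired linear lower bound $k+1$ on the number of ``mixed'' vertices in $X$, which the remainder of the proof can then exploit to reach a contradiction with $\mw_2(G) \le k$.
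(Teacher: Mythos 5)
Your proof is correct, and it even delivers the slightly stronger conclusion $|X^\star| \ge k+2$. The mechanism differs a little from the paper's: the paper views the $C \times X$ adjacency matrix over $\mathbb{F}_2$, deduces rank at least $k+2$ from the fact that $C$ has more than $2^{k+1}$ distinct rows, extracts $k+2$ non-null pairwise distinct columns, and then discards the at most one all-ones column to land on $k+1$ mixed vertices. You instead argue directly that the columns outside $X^\star$ are constant on $C$, so the injection $c \mapsto N(c) \cap X$ guaranteed by \cref{lem:density-increase} must already be an injection $c \mapsto N(c) \cap X^\star$, whence $|C| \le 2^{|X^\star|}$. Your route is more elementary (no linear algebra) and handles both constant column types symmetrically, rather than subtracting one for the all-ones column at the end; the paper's rank formulation has the mild advantage of slotting into the same $\mathbb{F}_2$ framework used in the flip-width variant (\cref{thm:fw-linear-density}), where the low-rank perturbation coming from a $k$-flip genuinely matters. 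Either version supports the rest of the argument unchanged.
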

    \begin{proof}
        Consider the adjacency matrix of~$X$ versus~$C$ as a matrix over~$\mathbb{F}_2$.
        Vertices in~$C \subseteq Y$ have distinct neighbourhoods over~$X$ and~$|C| > 2^{k+1}$,
        thus this matrix has rank at least~$k+2$.
        This implies that there are at least~$k+2$ vertices $x_1,\dots,x_{k+2} \in X$ with non-null and pairwise distinct neighbourhoods over~$C$.
        At most one of them is fully adjacent to~$C$.
        This leaves at least~$k+1$ vertices in~$X$ which are neither fully adjacent nor fully non-adjacent to~$C$.
        \cqed
    \end{proof}
    To conclude, remark that since~$C$ is contained within the part~$P \in \Pc_i$,
    each of these~$k+1$ vertices $x_1,\dots,x_{k+1}$ must be part of a resolved pair~$cx_j \in R_i$ for some~$c \in C$.
    This implies that all~$x_j$ are within distance~$2$ of~$x$ in~$R_i$.
    However, the vertices~$x_i \in X$ are all in distinct parts of~$\Pc_{i-1}$ by choice of~$\Pc_i$,
    contradicting the assumption that $(\Pc_1,R_1),\dots,(\Pc_m,R_m)$ has radius-2 width~$k$.
\end{proof}

\subsection{Flip-width}
\fwneighbourhood*

Recall that an \emph{$(r,k,d)$-hideout} in a graph~$G$ is a subset~$U$ of vertices such that in any $k$-flip~$G'$ of~$G$,
for all but~$d$ vertices $x \in U$, the distance-$r$ neighbourhood $B_{G'}^r(x)$ contains more than~$d$ vertices of~$U$.
We will show that the sets provided by \cref{lem:density-increase} yield a $(2,k,k)$-hideout, which implies that radius-2 flip-width is more than~$k$ by \cref{lem:hideout}.
\begin{proof}
    Using \cref{lem:density-increase}, consider subsets $X,Y \subset V(G)$ such that
    vertices in~$Y$ have pairwise distinct neighbourhoods in~$X$,
    while vertices in~$X$ pairwise have neighbourhoods differing on more than~$2^{2k+1}$ vertices of~$Y$.
    Let us prove that~$X$ is a $(2,k,k)$-hideout.

    Consider a $k$-flip~$G'$ of~$G$ obtained through a partition~$\Pc$ of~$V(G)$ into~$k$ parts.
    Define~$X'$ as the set of vertices~$x \in X$ satisfying $|B^2_{G'}(x) \cap X| \le k$,
    and assume for a contradiction that~$X'$ has size at least~$k+1$.
    Two vertices~$x,y \in X'$ must belong to the same part of~$\Pc$.
    Then the symmetric difference of~$N(x)$ and~$N(y)$ is exactly the same in~$G$ and~$G'$,
    and in particular $|Y \cap \triangle_{G'}(x,y)| > 2^{2k+1}$,
    which implies that one of the two, say~$x$, is adjacent to more than~$2^{2k}$ vertices of~$Y$.

    Define~$A = N_{G'}(x) \cap Y$, and consider the adjacency matrices~$M$ of~$G$ and~$M'$ of~$G'$
    (as matrices over the 2-element field~$\mathbb{F}_2$).
    Since vertices in~$Y$ have pairwise distinct neighbourhoods over~$X$ in~$G$,
    the restriction of~$M$ to rows in $A \subseteq Y$ and columns in~$X$ has rank more than~$2k$.
    Now by definition of a $k$-flip, $M$ and~$M'$ differ by a matrix of rank at most~$k$.
    It follows that the same~$A \times X$ submatrix in~$M'$ still has rank more than~$k$.
    This implies that in~$G'$, $A$ is adjacent to more than~$k$ vertices of~$X$.
    These more than~$k$ vertices are in~$X \cap B^2_{G'}(x)$, a contradiction.
\end{proof}

\section*{Acknowledgements}
The authors would like to thank Szymon Toruńczyk for insightful discussions, notably regarding \cref{conj:mw-linear-characterisation},
and Édouard Bonnet for helpful remarks about \cref{qst:radius1-chibounded}.

\bibliographystyle{alphaurl}
\bibliography{refs}

\end{document}